\newcommand{\Aut}{\operatorname{Aut}}
\newcommand{\C}{{\mathbb C}}
\newcommand{\Diff}{\operatorname{Diff}}
\newcommand{\GL}{\operatorname{GL}}
\newcommand{\Hess}{\operatorname{Hess}}
\newcommand{\Id}{\operatorname{Id}}
\newcommand{\Image}{\operatorname{Im}}
\newcommand{\inv}{\operatorname{inv}}
\newcommand{\Isom}{\operatorname{Isom}}
\newcommand{\Ker}{\operatorname{Ker}}
\newcommand{\pt}{\operatorname{pt}}
\newcommand{\R}{{\mathbb R}}
\newcommand{\Riem}{\operatorname{Riem}}
\newcommand{\SU}{\operatorname {SU}}
\newcommand{\supp}{\operatorname{supp}}
\newcommand{\vol}{\operatorname{vol}}
\newcommand{\Z}{{\mathbb Z}}
\numberwithin{equation}{section}
\theoremstyle{plain}
\newtheorem{definition}{Definition}
\newtheorem{lemma}{Lemma}
\newtheorem{theorem}{Theorem}
\newtheorem{proposition}{Proposition}
\newtheorem{corollary}{Corollary}
\theoremstyle{remark}
\newtheorem{remark}{Remark}
\newtheorem{example}{Example}
\begin{document}
\title{Collapsing with a lower bound on the curvature operator}
\author{John Lott}
\address{Department of Mathematics\\
University of California - Berkeley\\
Berkeley, CA  94720-3840\\ 
USA} \email{lott@math.berkeley.edu}

\thanks{This research was partially supported
by NSF grant DMS-1207654}
\date{February 7, 2014}
\subjclass[2010]{53C44}

\begin{abstract}
Cheeger and Gromov showed that F-structures are related to collapse with a double-sided curvature bound.
We define fibered F-structures and extend some of the Cheeger-Gromov results to the setting of collapse with
a lower bound on the curvature operator.
\end{abstract}

\maketitle

% \tableofcontents

\section{Introduction} \label{section1}

A sequence  $\{g_i\}_{i\: = \:1}^\infty$ of Riemannian metrics on a compact manifold $M$ makes it {\em collapse} to a
lower-dimensional metric space $Y$  if 
the metric spaces $\{(M, g_i)\}_{i\: = \:1}^\infty$ converge to $Y$ in the Gromov-Hausdorff topology.  
If one imposes some curvature condition on the Riemannian metrics then the collapsing can force
extra structure on $M$.  The best understood case is when the Riemannian metrics have a uniform
double-sided curvature bound.  In that case, Cheeger and Gromov showed that collapsing occurs if and only if
$M$ admits an {\em F-structure} of positive rank \cite{Cheeger-Gromov (1986),Cheeger-Gromov (1990)}.
In later work, Cheeger-Fukaya-Gromov gave a complete description of the collapsing geometry in terms of Nil-structures
\cite{Cheeger-Fukaya-Gromov (1992)}.

With the success of the Cheeger-Gromov work, attention turned to collapsing under other curvature bounds.
The case of a uniform lower bound on {\em sectional curvature} was considered by Fukaya-Yamaguchi,
Kapovitch-Petrunin-Tuschmann and others; see \cite{Kapovitch-Petrunin-Tuschmann (2010)} and references therein.
The case of a uniform lower bound on {\em Ricci curvature} was considered by Gromov, Cheeger-Colding,
Kapovitch-Wilking and others; see \cite{Kapovitch-Wilking (2011)} and references therein. By intricate arguments,
it was shown in these cases that the collapsing manifold has a weak fibration structure.

The {\em curvature operator} is the symmetric operator on $2$-forms coming from the Riemann curvature tensor.
The results in this paper indicate that if one assumes a uniform lower bound on the
curvature operator then the collapsing picture is structurally similar to the bounded curvature case considered
by Cheeger and Gromov.  The difference between collapse with curvature operator bounded below, and
collapse with sectional curvature bounded below, is closely tied to the difference between actions
of abelian Lie groups and actions of nonabelian Lie groups; see Section \ref{section2} for examples.

One advantage of the curvature operator is that one knows the manifolds that admit a Riemannian metric
with nonnegative curvature operator, from work of  B\"ohm-Wilking \cite{Boehm-Wilking (2008)}.
This is contrast to the situation with nonnegative sectional curvature.

The Cheeger-Gromov notion of an F-structure is based on local torus actions.  If $M$ has an F-structure
then for each $m \in M$, there is a neighborhood of $m$ with a finite cover that admits a torus action.
The actions by these tori, whose dimensions can vary, have to be consistent on overlaps. 
In this paper we define
a {\em fibered F-structure}, or an fF-structure.  This structure mixes local torus actions and local
fiberings. The motivation for fF-structures comes from collapsing examples, such as those in Section \ref{section2}.

\begin{definition} \label{def1}
An atlas for an fF-structure on a smooth connected manifold $M$ is given by a locally finite open cover $M \: = \: \bigcup_i U_i$
and for each $i$,
\begin{enumerate}
\item A finite normal cover
$\pi_i : \widehat{U}_i \rightarrow U_i$, say with covering group $\Gamma_i$,
\item An effective action $\rho_i : T_i \rightarrow \Diff(\widehat{U}_i)$ of a torus group $T_i$, which is
$\Gamma_i$-equivariant with respect to a homomorphism $\Gamma_i \rightarrow \Aut(T_{i})$,  and
\item A  $T_i \rtimes \Gamma_i$-equivariant  proper submersion $s_i : \widehat{U}_i \rightarrow
\widehat{B}_i$ so that
\item The $T_i$-action is never vertical; given $X$ in the Lie algebra 
${\mathfrak t}_i$ and $\widehat{p} \in \widehat{U}_{i}$,
if the corresponding vector field $V_X$ has $V_X(\widehat{p}) \in \Ker(d{s_i})$ then
$V_X(\widehat{p}) \: = \: 0$. 
\end{enumerate}
\end{definition}

There is a consistency requirement on overlaps, which is detailed in Section \ref{section3}. 

One can extend notions from F-structures to fF-structures.
An fF-structure has {\em positive rank} if for each $i$,
$\dim(\widehat{B}_i) < \dim(\widehat{U}_i)$ or 
the $T_i$-action has no global fixed points. 
If for each $i$, the orbits of the $T_i$-action on $\widehat{U}_i$ all have the same dimension
(possibly depending on $i$),
then the fF-structure is {\em polarized}. If $\dim(T_i)$ is independent of $i$, and
$\dim(s_i^{-1}(\pt))$ is independent of $i$, then the
fF-structure is {\em pure}.  More generally,
there is a notion of a substructure, whose actions are by
subgroups (not necessarily closed) of the toral groups, so it makes sense to talk about
an fF-structure with  a polarized substructure.

There is a notion of an {\em invariant vertical metric} for the fF-structure.  Invariant vertical metrics always exist.
There is also a notion of an {\em invariant metric}.
Given an invariant vertical metric, one can extend it to an invariant metric.

The Cheeger-Gromov results about bounded curvature collapse are phrased in terms of the injectivity radius. Although the injectivity
radius is not suitable for other types of collapse, one can instead use local volumes; with bounded curvature,
the two notions of collapse are equivalent \cite{Cheeger (1970),Cheeger-Gromov-Taylor (1982)}.
Given $r > 0$, $m \in M$ and a Riemannian metric $g$ on $M$, let $V_{r,m}(g)$ denote the volume of the ball of radius $r$ around $m$.
The next three theorems extend the main results of \cite{Cheeger-Gromov (1986)} to the setting of collapse with a lower
bound on the curvature operator.

\begin{theorem} \label{thm1}
Suppose that $M$ is a compact manifold with a pure polarized fF-structure of positive rank. 
Suppose that there is an invariant vertical metric with nonnegative vertical curvature operator. 
Then there is a
family $\{g_\epsilon\}_{\epsilon > 0}$ of invariant metrics on $M$ so that 
\begin{enumerate}
\item The curvature operators of $\{(M, g_{\epsilon})\}_{\epsilon > 0}$ are uniformly bounded below.
\item The diameters of $\{(M, g_{\epsilon})\}_{\epsilon > 0}$ are uniformly bounded above.
\item The Gromov-Hausdorff limit $\lim_{\epsilon \rightarrow 0} (M, g_\epsilon)$ is the
orbit space of the fF-structure, a lower-dimensional length space.
\end{enumerate}
\end{theorem}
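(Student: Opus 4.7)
The plan is to build $g_\epsilon$ first on each chart $\widehat{U}_i$ of the fF-structure using a two-scale Cheeger--Gromov-type construction, and then to glue the pieces via an equivariant partition of unity.

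In a single chart, condition (4) of Definition \ref{def1} implies that the torus $T_i$ acts locally freely on $\widehat{B}_i$ through the $s_i$-equivariant action. Let $g_{V,i}$ denote the given invariant vertical metric with nonnegative vertical curvature operator, choose any $T_i \rtimes \Gamma_i$-invariant metric $g_{B,i}$ on $\widehat{B}_i$, and let $H_{B,i}$ and $\tau_{B,i}$ be the $g_{B,i}$-orthogonal complement of, and the tangent distribution of, the $T_i$-orbits on $\widehat{B}_i$, respectively. For $\epsilon \in (0,1]$ set
\[
g_{B,i,\epsilon} \: = \: g_{B,i}|_{H_{B,i}} \: + \: \epsilon^2 \, g_{B,i}|_{\tau_{B,i}}, \qquad g_{i,\epsilon} \: = \: s_i^* g_{B,i,\epsilon} \: + \: \epsilon^2 \, g_{V,i},
\]
with $s_i^* g_{B,i,\epsilon}$ extended by zero on $\Ker(ds_i)$. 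The map $s_i : (\widehat{U}_i, g_{i,\epsilon}) \to (\widehat{B}_i, g_{B,i,\epsilon})$ is a Riemannian submersion whose fibers carry the scaled metric $\epsilon^2 g_{V,i}$. Rescaling by $\epsilon^2$ preserves nonnegativity of the curvature operator, so the fibers of $(\widehat{U}_i, g_{i,\epsilon})$ have curvature operator $\geq 0$. The O'Neill $A$ and $T$ tensors of the submersion, measured in $g_{i,\epsilon}$, inherit a factor of $\epsilon^2$ from the vertical inner product and hence are $O(\epsilon)$ in norm. Meanwhile $(\widehat{B}_i, g_{B,i,\epsilon})$ is obtained from $(\widehat{B}_i, g_{B,i})$ by the Cheeger--Gromov scaling construction applied to a locally free torus action, so its curvature operator is uniformly bounded below. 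Inserting these ingredients into the O'Neill expansion of the full curvature operator on $\Lambda^2 T\widehat{U}_i$ gives a lower bound on the curvature operator of $g_{i,\epsilon}$ that is independent of $\epsilon$.

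Because the fF-structure is pure and polarized, the consistency requirement of Section \ref{section3} makes the local decompositions $\Ker(ds_i) \oplus H_i$ and the invariant vertical metrics compatible on overlaps up to the action of common stabilizers. Averaging an ordinary partition of unity subordinate to $\{U_i\}$ over the local torus and covering-group actions produces an fF-invariant partition of unity $\{\phi_i\}$, and one sets $g_\epsilon = \sum_i \phi_i \, g_{i,\epsilon}$ interpreted invariantly on each $\widehat{U}_i$ and pushed down to $M$. The uniform chart-wise lower bound on the curvature operator transfers to a global bound because the $\phi_i$ and their derivatives are $\epsilon$-independent while the orbit-space directions of $g_\epsilon$ do not shrink, so the Hessian corrections introduced by patching are $O(1)$. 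The diameter bound is immediate since $M$ is compact and only the vertical and torus-orbit directions contract; the Gromov--Hausdorff limit is identified with the orbit space of the fF-structure directly from the local models.

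The main obstacle is the last step: one must verify that on overlaps, the O'Neill $A$-tensors of distinct local Riemannian submersions combine in a way that preserves the uniform lower bound on the curvature operator. This is where the precise form of the consistency conditions from Section \ref{section3} is essential, much as the compatibility of local torus actions in \cite{Cheeger-Gromov (1986)} is essential for bounded-curvature collapse. A secondary subtlety is arranging the invariant vertical metric $g_{V,i}$ so that the second fundamental form $T$ of the fibers of $s_i$ is uniformly bounded in $\epsilon$; this should follow from the full $T_i \rtimes \Gamma_i$-invariance of $g_{V,i}$ together with purity of the fF-structure.
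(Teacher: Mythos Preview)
There is a genuine gap in your argument, and it is precisely the point the paper identifies as the heart of the matter. Your claim that ``the O'Neill $A$ and $T$ tensors of the submersion, measured in $g_{i,\epsilon}$, inherit a factor of $\epsilon^2$ from the vertical inner product and hence are $O(\epsilon)$ in norm'' is incorrect for $T$: in the rescaled orthonormal frame of (\ref{4.3}) with $f=\log\epsilon$ constant one has $\widetilde{T}^I_{\:\:\alpha J}=T^I_{\:\:\alpha J}$, not $\epsilon\, T^I_{\:\:\alpha J}$. More seriously, even bounded $T$ would not suffice. From (\ref{4.6}) the component
\[
\widetilde{R}^I_{\:\:\alpha JK}\;=\;\epsilon^{-1}\bigl(\nabla_J T^I_{\:\:\alpha K}-\nabla_K T^I_{\:\:\alpha J}\bigr)\;+\;O(\epsilon)
\]
carries an explicit factor $\epsilon^{-1}$ multiplying a quantity that is independent of $\epsilon$; cf.\ (\ref{5.5}). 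Unless the antisymmetrized fibre-direction covariant derivative of the second fundamental form vanishes, this term blows up and the curvature operator cannot be bounded below. Nothing in your construction forces it to vanish: a general fibre with nonnegative curvature operator, sitting inside a general Riemannian submersion, will have $\nabla_J T^I_{\:\:\alpha K}\neq 0$.

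The paper's remedy is a structural one you do not have. Using the classification of compact manifolds with nonnegative curvature operator, a finite cover of each fibre splits as $T^a\times\prod_m F_m$ with each $F_m$ irreducible symmetric. Lemma~\ref{lem4} runs the normalized Ricci flow fibrewise to produce such a structure continuously in the base, thereby reducing the structure group of $s$ to the group $G$ of diffeomorphisms that are affine on the $T^a$-factor and isometric on the $F_m$'s. After choosing a compatible vertical metric and horizontal distribution, $T^I_{\:\:\alpha K}$ is nonzero only in the flat $T^a$ directions, and in affine coordinates there one gets $e_J T^I_{\:\:\alpha K}=0$ and $\omega^I_{\:\:LJ}=0$, so the divergent term $\widetilde{S}^I_{\:\:\alpha JK}$ of (\ref{5.5}) vanishes identically. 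This reduction of structure group via Ricci flow is the essential new ingredient; without it the scaling construction does not give a lower bound on the curvature operator.

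A secondary point: because the fF-structure is \emph{pure}, the paper passes to a single holonomy cover $\widehat{M}\to M$ carrying a global $T^k$-action and a global submersion $s:\widehat{M}\to\widehat{B}$; no partition-of-unity gluing is needed, and the problems you anticipate with combining local $A$-tensors on overlaps do not arise.
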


\begin{theorem} \label{thm2}
Suppose that $M$ is a compact manifold with a fF-structure, having a polarized substructure of positive rank.  Suppose
that there is an invariant metric on $M$ for which the substructure has a positive vertical curvature operator.  Then there is a
family $\{g_\epsilon\}_{\epsilon > 0}$ of invariant metrics on $M$ so that 
\begin{enumerate}
\item The curvature operators of $\{(M, g_{\epsilon})\}_{\epsilon > 0}$ are uniformly bounded below.
\item $\lim_{\epsilon \rightarrow 0} \vol(M, g_\epsilon) \: = \: 0$.
\end{enumerate}
\end{theorem}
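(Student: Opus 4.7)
The plan is to produce $\{g_\epsilon\}_{\epsilon>0}$ by isotropically shrinking the given invariant metric $g$ only in the directions tangent to the substructure orbits. Since the substructure is polarized of constant orbit-dimension $d\geq 1$, the orbit-tangent distribution $V'\subset TM$ is a smooth rank-$d$ subbundle globally defined on $M$: the local definition on each $\widehat U_i$ descends to $U_i$ via $\Gamma_i$-equivariance, and matches on overlaps by the substructure consistency conditions of Section~\ref{section3}. Using the $g$-orthogonal splitting $TM=V'\oplus H$, where $H:=(V')^{\perp_g}$, I set
\[
g_\epsilon \,:=\, g|_H \,+\, \epsilon^2\, g|_{V'}.
\]
Both $V'$ and $H$ are preserved by the substructure and the full fF-structure (they are defined from the invariant metric $g$), so $g_\epsilon$ is again an invariant metric. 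In a $g$-orthonormal frame adapted to the splitting, the volume form of $g_\epsilon$ equals $\epsilon^d$ times that of $g$, so $\vol(M,g_\epsilon)=\epsilon^d\,\vol(M,g)\to 0$ as $\epsilon\to 0$, giving conclusion (2).

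For conclusion (1), I write the curvature operator $R_\epsilon$ of $g_\epsilon$ as a block matrix with respect to
\[
\Lambda^2 TM \,=\, \Lambda^2 V' \;\oplus\; (V'\wedge H)\;\oplus\;\Lambda^2 H,
\]
say $R_\epsilon=\bigl(\begin{smallmatrix} A_\epsilon & B_\epsilon \\ B_\epsilon^{*} & D_\epsilon\end{smallmatrix}\bigr)$, with $A_\epsilon$ acting on $\Lambda^2V'$ and $D_\epsilon$ on the complement. An O'Neill-type calculation for the partial rescaling $g\mapsto g_\epsilon$ should give, for $\epsilon\in(0,1]$,
\[
A_\epsilon \,\geq\, c\,\epsilon^{-2}\, I \qquad\text{and}\qquad \|B_\epsilon\|\,+\,\|D_\epsilon\| \,\leq\, C,
\]
where $c>0$ is extracted from the strict positivity of the vertical curvature operator on $V'$ in the metric $g$, and $C$ depends only on $(M,g)$. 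For $\omega=\omega_V+\omega_R$ with $\omega_V\in\Lambda^2V'$, a Schur-complement / completing-the-square estimate then gives
\[
\langle R_\epsilon\omega,\omega\rangle \,\geq\, c\epsilon^{-2}|\omega_V|^2\,-\,2C|\omega_V||\omega_R|\,-\,C|\omega_R|^2 \,\geq\, -\Bigl(\tfrac{C^2}{c}+C\Bigr)|\omega|^2,
\]
which is the required uniform lower bound.

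The main obstacle is verifying the stated scaling of $A_\epsilon$, $B_\epsilon$, $D_\epsilon$. One must track how the shape tensor of the orbit distribution and the A-tensor of the orthogonal complement contribute to $R_\epsilon$ when $V'$ is shrunk, and in particular rule out negative $\epsilon^{-k}$ divergences in the off-diagonal and $\Lambda^2H\oplus(V'\wedge H)$ blocks. The key simplification is the isometric action of the substructure: any local vector field $X\in\mathfrak t'$ satisfies $L_X g=0$, and since such vector fields span $V'$, this kills precisely the mixed Christoffel-symbol corrections along orbit directions that would otherwise produce $\epsilon^{-k}$ ($k\geq 1$) singularities. Strict positivity of the vertical curvature operator — rather than mere non-negativity as in Theorem~\ref{thm1} — then provides the margin needed for the Schur-complement absorption. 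Note that unlike Theorem~\ref{thm1} neither a diameter bound nor an identification of the Gromov--Hausdorff limit is claimed here, which is why rescaling only the substructure-vertical direction suffices, instead of needing a polarized collapse of the full fF-structure.
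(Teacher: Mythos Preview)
Your basic strategy---shrink the substructure directions and use the strictly positive vertical curvature operator to absorb cross terms via a Schur-complement argument---is exactly the mechanism the paper uses; see the estimate (\ref{6.3}). But two of your claimed inputs are incorrect, and they matter.

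First, a polarized substructure is \emph{not} assumed to be pure: the paper explicitly allows $\dim(\mathcal{K}'(U_i))$ and $\dim(D'_{U_i})$ to depend on $i$ (see the paragraph after Definition~\ref{def4}). So there is in general no globally defined smooth subbundle $V'$ of constant rank, and your global rescaling $g_\epsilon = g|_H + \epsilon^2 g|_{V'}$ is not even defined. The paper deals with this by an inductive, chart-by-chart shrinking using invariant bump functions $F_i$, preceded by the global dilation $g_0 = (\log^2\epsilon^{-1})\,g_{\inv}$, which keeps the $|\nabla f|$ and $\operatorname{Hess}(f)$ terms in (\ref{4.6}) under control when $f = F_i'\log\epsilon$ is nonconstant.

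Second, even on a single chart your scaling claim $\|B_\epsilon\|\le C$ is wrong, and your justification for it confuses the two pieces of the substructure. The distribution $V'$ has a torus-action part $\eta(\mathcal{K}')$ and a fibration part $\mathcal{D}'$. Only the former is spanned by Killing fields; the $\mathcal{D}'$ leaves are not totally geodesic and their second fundamental form $T$ produces the off-diagonal term $\widetilde{S}^I_{\ \alpha JK} = \epsilon^{-1}(\nabla_J T^I_{\ \alpha K} - \nabla_K T^I_{\ \alpha J})$ from (\ref{5.5}), which diverges like $\epsilon^{-1}$. (This is precisely the term that Theorem~\ref{thm1} kills by a structure-group reduction, and which Theorem~\ref{thm2} instead absorbs.) Likewise, the positivity hypothesis is on $\Lambda^2((D'_{U_i})^*)$ only, so $A_\epsilon\ge c\epsilon^{-2}$ holds on $\Lambda^2\mathcal{D}'$, not on all of $\Lambda^2 V'$; the torus block is flat. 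The completing-the-square still works, but only because the $\epsilon^{-1}$ cross term pairs exactly with the $\epsilon^{-2}$ positive block, as in (\ref{6.3}); your stated bounds do not reflect this and your Killing-field argument does not establish it.
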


\begin{theorem} \label{thm3}
Suppose that $M$ is a compact manifold with an fF-structure of positive rank.  Suppose
that there is an invariant vertical metric with nonnegative vertical curvature operator.  Then there is a
family $\{g_\epsilon\}_{\epsilon > 0}$ of invariant metrics on $M$ so that 
\begin{enumerate}
\item The curvature operators of $\{(M, g_{\epsilon})\}_{\epsilon > 0}$ are uniformly bounded below.
\item  For any $r > 0$, 
we have $\lim_{\epsilon \rightarrow 0} \sup_{m \in M} V_{r, m}(g_\epsilon) \: = \: 0$.
\end{enumerate}
\end{theorem}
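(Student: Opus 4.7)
The plan is to reduce Theorem \ref{thm3} to Theorem \ref{thm1} by stratifying $M$ according to the orbit structure of the fF-structure, applying Theorem \ref{thm1} on the open dense stratum, and gluing the resulting metric families across lower strata by an equivariant partition of unity.

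First, following the pattern of Cheeger-Gromov \cite{Cheeger-Gromov (1986)}, I would show that any fF-structure of positive rank admits a polarized substructure of positive rank: on each chart $\widehat{U}_i$, replace the $T_i$-action by a (possibly non-closed) subgroup $S_i \subset T_i$ whose orbits on a dense open subset of $\widehat{U}_i$ all have the same dimension, retaining positive rank by using the fibering direction whenever the $S_i$-orbits degenerate. The consistency conditions of Section \ref{section3} on overlaps are arranged by making compatible choices one orbit-type stratum at a time. Then stratify $M$ by the pair $(\dim S_i\text{-orbit},\ \dim s_i^{-1}(\pt))$; on the open stratum $M^0$ where this pair is maximal, the induced sub-fF-structure is pure polarized of positive rank, and Theorem \ref{thm1} produces a family $g^0_\epsilon$ of invariant metrics on a neighborhood of $M^0$ with uniformly lower-bounded curvature operators. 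By downward induction on $\dim M$ applied to a tubular neighborhood of the complementary stratum, the lower strata are handled analogously.

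The metrics from different strata are assembled by a partition of unity $\{\chi_k\}$ subordinate to a refinement of the stratification. To preserve invariance and the curvature-operator lower bound, each $\chi_k$ is chosen inside $\widehat{U}_i$ to be pulled back from $\widehat{B}_i$ via $s_i$ and to be $T_i \rtimes \Gamma_i$-invariant, so that its derivatives vanish in every collapsing direction within the relevant chart. Choosing the scales $\epsilon_k$ to decrease geometrically from the densest stratum to the shallowest ensures that on overlaps the finer metric is a uniformly small perturbation of the coarser one, preventing cutoff derivatives from creating large negative sectional curvatures in the glued metric. Since each $m \in M$ lies in some stratum whose fF-structure has positive collapse dimension, $V_{r, m}(g_\epsilon)$ is dominated by the volume of a transverse slice to the shrinking orbit or fiber and tends to zero uniformly in $m$.

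The main obstacle is the gluing step. Because only a \emph{lower} bound on the curvature operator is available --- with no matching upper bound --- the standard O'Neill-type or warped-product estimates that control curvature on interpolated Riemannian submersions do not apply directly. The argument hinges on the fact that the partition functions, being basic for the local torus-times-fiber structure, contribute no second-derivative terms in the shrinking directions; thus the curvature contribution of the interpolation comes only from the base, where the metric is essentially undisturbed. Verifying this carefully across stratum boundaries, where orbit and fiber dimensions jump, is the technical heart of the proof and is where the fibered aspect of the fF-structure, as opposed to the pure torus actions of Cheeger-Gromov, enters most essentially.
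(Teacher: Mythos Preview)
Your proposal has a genuine gap: it misses the central technical device of the paper's proof, which is \emph{expansion} in the directions transverse to the low-dimensional orbit strata. The paper does not reduce to Theorem~\ref{thm1} or glue collapsed metrics by a partition of unity. Following \cite[Section 4]{Cheeger-Gromov (1986)}, it stratifies $\widehat{V}_i$ by orbit dimension and, on a tubular neighborhood $\widehat{V}_{i,j}$ of each stratum $\widehat{\Sigma}_{i,j}^\prime$, writes the metric as $\widehat{g}^{(0)}\oplus\widehat{g}^{(1)}\oplus\widehat{g}^{(2)}\oplus\widehat{g}^{(3)}$ (fiber, local-polarization, isotropy, and normal pieces) and then replaces it by
\[
\epsilon^{2\widehat{F}_i\widehat{F}_{i,j-1}}\widehat{g}^{(0)}+\epsilon^{2\widehat{F}_i\widehat{F}_{i,j-1}}\widehat{g}^{(1)}+\widehat{g}^{(2)}+\epsilon^{-4\widehat{F}_i\widehat{F}_{i,j}}\widehat{g}^{(3)}.
\]
The factor $\epsilon^{-4\widehat{F}_i\widehat{F}_{i,j}}$ blows up the normal directions, and in the curvature formulas~(\ref{4.4}) any component carrying a normal index then stays bounded below. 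This reduces the curvature check to the submanifolds $s_{i,j}^{-1}(T_i\cdot\widehat{p})$, where the argument of Section~\ref{section6} (using nonnegativity of the vertical curvature operator) applies. The expansion, not a cutoff argument, is what absorbs the jump in orbit dimension across strata.

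Your partition-of-unity gluing would not accomplish this. Even if each $\chi_k$ is basic, the metrics you are interpolating near a stratum boundary collapse in \emph{different} subspaces of the tangent space, so the convex combination is a Riemannian submersion for neither; the mixed components $\widetilde{R}^i_{\ \alpha j\beta}$ and $\widetilde{R}^I_{\ \alpha JK}$ in (\ref{4.6}) and (\ref{5.5}) involve second fundamental forms and $A$-tensors of these mismatched submersions and are not controlled just by making $\chi_k$ constant along the coarser collapsing directions. Choosing geometrically separated scales $\epsilon_k$ does not help, because the dangerous terms scale like $\epsilon^{-1}$ or $\epsilon^{-2}$ and a bounded multiplicative error in the cutoff cannot offset that. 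Moreover, Theorem~\ref{thm1} is stated for compact $M$ with a pure polarized structure; applying it on the noncompact open stratum $M^0$, and then on tubular neighborhoods of lower strata, requires exactly the kind of boundary control that the expansion trick provides and that your outline leaves unaddressed.
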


\begin{corollary} \label{cor1}
Under the assumptions of Theorem \ref{thm3}, the simplicial volume of $M$ vanishes.
\end{corollary}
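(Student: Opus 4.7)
The plan is to combine Theorem \ref{thm3} with Gromov's vanishing theorem for simplicial volume under a lower Ricci bound together with local volume collapse. First I would observe that a uniform lower bound on the curvature operator implies a uniform lower bound on sectional curvature, since for any orthonormal pair $e_i,e_j$ one has $K(e_i,e_j) \: = \: \langle \mathcal{R}(e_i \wedge e_j),\: e_i \wedge e_j\rangle$, and consequently a uniform lower Ricci bound. So the metrics $\{g_\epsilon\}$ produced by Theorem \ref{thm3} satisfy $\Ric(g_\epsilon) \geq -C$ for a constant $C$ independent of $\epsilon$.

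Next I would invoke the following theorem of Gromov (a consequence of the diffusion-of-cycles technique in his work on bounded cohomology, extended to the Ricci setting): there exists $\epsilon(n) > 0$ depending only on $n \: = \: \dim M$ such that if a closed $n$-manifold $M$ admits a Riemannian metric $g$ with $\Ric(g) \geq -(n-1)$ and $V_{1,m}(g) < \epsilon(n)$ for every $m \in M$, then the simplicial volume $\|M\|$ vanishes. To put the metrics from Theorem \ref{thm3} into the normalized form $\Ric \geq -(n-1)$, I rescale uniformly: choose $\lambda > 0$ so that $\lambda^{-2} C \leq n-1$ and consider $\tilde g_\epsilon \: = \: \lambda^2 g_\epsilon$. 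Under this rescaling $V_{1,m}(\tilde g_\epsilon) \: = \: \lambda^n V_{1/\lambda,m}(g_\epsilon)$, and Theorem \ref{thm3} applied at radius $r \: = \: 1/\lambda$ yields $\sup_{m\in M} V_{1,m}(\tilde g_\epsilon) \to 0$. In particular, for $\epsilon$ small enough $\tilde g_\epsilon$ satisfies the hypotheses of Gromov's theorem, so $\|M\| \: = \: 0$.

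The main conceptual point, and the only real step that is not bookkeeping, is the appeal to Gromov's vanishing theorem under a lower Ricci bound plus uniformly small unit-ball volumes. Everything else (the rescaling, the passage from curvature operator to Ricci) is immediate. If one preferred to avoid the Ricci version, an alternative would be to exploit the local structure provided by the fF-structure directly: the existence of the local torus actions and invariant fibrations in Definition \ref{def1} lets one imitate the Cheeger-Gromov argument that an F-structure of positive rank forces vanishing simplicial volume, by constructing small amenable open coverings associated to the orbits of the $T_i$-actions and applying Gromov's vanishing criterion via amenable covers. Either route reaches the same conclusion, but routing through Theorem \ref{thm3} and the Ricci-collapse vanishing theorem is the shortest.
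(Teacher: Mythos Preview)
Your argument is correct and is essentially the same as the paper's: the paper's proof is the single sentence ``This follows from Theorem \ref{thm3} and the isolation theorem of \cite[p.~14]{Gromov (1982)}'', and what you have written is precisely the unpacking of that sentence (curvature operator lower bound $\Rightarrow$ Ricci lower bound, rescale to normalize, then apply Gromov's isolation/vanishing theorem to the locally volume-collapsed metrics). Your alternative suggestion via amenable covers is a genuinely different route, but your primary argument matches the paper's.
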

\begin{proof}
This follows from Theorem \ref{thm3} and the isolation theorem of \cite[p. 14]{Gromov (1982)}.
\end{proof}

In the proofs of Theorems \ref{thm1}-\ref{thm3}, in each case there is a potentially dangerous
term in the curvature operator that must be handled.  The way to handle this term
differs in the three cases. For  Theorem \ref{thm1}, we use the Ricci flow to
reduce the structure group of the fiber bundle. For  Theorem \ref{thm2}, we
use the positivity of the vertical curvature operator. For Theorem \ref{thm3},
we expand in transverse directions.

The converses of the F-structure versions of Theorems \ref{thm1} and \ref{thm2} fail
\cite[Example 6.4]{Cheeger-Rong (1995)},\cite[Remark 4.1]{Cheeger-Gromov (1986)}.
The converse of the F-structure version of Theorem \ref{thm3} is the main result of
\cite{Cheeger-Gromov (1990)}. In Section \ref{section8} we give a local model for the
geometry of a manifold that satisfies the conclusion of Theorem \ref{thm3}, under a smoothing 
assumption.  We show that the
local model has a canonical fF-structure with nonnegative vertical curvature operator.  It seems plausible that if $M$ satisfies
the conclusion of Theorem \ref{thm3} then after removing a finite number of disjoint topological balls,
the remainder has an fF-structure of positive rank, with fibers that admit metrics having
nonnegative curvature operator; see Remark \ref{remark2} for further discussion. 

My interest in these questions came from a talk by Wilderich Tuschmann about his work with
Herrmann and Sebastian on manifolds with almost nonnegative curvature operator, which was
written up in \cite{Herrmann-Sebastian-Tuschmann (2012)}.  I thank Wilderich and
Peter Teichner for discussions.

\section{Nonnegative curvature operator} \label{section2}

In this section we give information about manifolds with nonnegative curvature operator.
We also give some examples of collapsing.

If $M$ is a Riemannian manifold then the curvature operator at $p \in M$ is the self-adjoint map $\Riem$
on $\Lambda^2(T^*_pM)$ given in local coordinates by
\begin{equation} \label{2.1}
\Riem(\omega) \: = \: \sum_{i,j,k,l} R_{ij}^{\: \: \: \: kl} \omega_{kl} \:
dx^i \wedge dx^j.\
\end{equation}
We can also consider $\Riem$ as an operator on $\Lambda^2(T^*_pM) \otimes \C$.
Clearly $\Riem \ge 0$ implies that $M$ has nonnegative sectional curvature but the converse
fails in dimension greater than three.

Any  symmetric space $M \: = \: G/H$ with nonnegative sectional curvature has nonnegative
curvature operator, as follows from the equation $R(X,Y)Z \: = \: [Z, [X, Y]]$ for
$X,Y,Z \in T_{eH} M$. A K\"ahler manifold of real dimension greater than two cannot have
positive curvature operator, since $\Riem$ vanishes on $\Lambda^{2,0}(T^*_pM) \oplus
\Lambda^{0,2}(T^*_pM)$. It will have nonnegative curvature operator if and only if
$\Riem$ is nonnegative on $\Lambda^{1,1}(T^*_pM)$.

B\"ohm and Wilking proved that a compact connected Riemannian manifold with
positive curvature operator is diffeomorphic to a spherical space form
\cite{Boehm-Wilking (2008)}. When combined with \cite[Theorem 5]{Chow-Yang (1989)},
we can say that if
$M$ is a compact connected Riemannian manifold with nonnegative curvature operator
then each factor in the de Rham decomposition of the universal cover $\widetilde{M}$ is isometric to 
one of
\begin{enumerate}
\item A Euclidean space.
\item A sphere with nonnegative curvature operator.
\item A  compact irreducible symmetric space.
\item A compact K\"ahler manifold that is biholomorphic to a complex projective space, with $\Riem$ nonnegative on real $(1,1)$-forms.
\end{enumerate}

Let $M$ be a complete connected Riemannian manifold with nonnegative curvature operator. 
Since $M$ also has nonnegative sectional curvature, it is diffeomorphic to a vector bundle
over its soul $S$, in the sense of Cheeger-Gromoll \cite{Cheeger-Gromoll (1972)}. 
The nonnegativity of the curvature operator implies
that there is a local isometric product structure over the soul \cite{Noronha (1989)}. That is, $M$ is
an isometric quotient
$(\widetilde{S} \times Y)/\Gamma$, where
\begin{enumerate}
\item $S$ is a compact Riemannian manifold having nonnegative curvature operator, with fundamental group $\Gamma$ and
 $\widetilde{S}$ as its universal cover, 
\item $(Y, y)$ is a pointed complete Riemannian manifold with nonnegative curvature operator that is diffeomorphic to $\R^k$, with $k \in [0,n]$, and
\item $\Gamma$ acts by isometries on $Y$, fixing $y$.
\end{enumerate}
That $\Gamma$ fixes a point $y \in Y$ comes from the fact that
the soul $S$ is a submanifold of $M$, and so must be
representable as $(\widetilde{S} \times \{y\})/\Gamma$.
We remark that the local isometric product structure also exists under the weaker assumption that $M$ has nonnegative
complex sectional curvature \cite[Theorem 2]{Cabezas-Rivas-Wilking (2011)}.

\begin{example} \label{example1}
Although $S^{2n+1}$ has positive curvature operator, $\C P^n \: = \: S^{2n+1}/S^1$ does not.  This shows that a
lower bound on the curvature operator is not preserved under taking quotients.
\end{example}

\begin{example} \label{example2}
Although $\SU(3)$, with a bi-invariant Riemannian metric, has nonnegative curvature operator, its quotient $\SU(3)/S^1$ by a circle subgroup cannot.
Rescaling, we see that a manifold with a nonnegative curvature operator can have a quotient with
arbitrarily negative curvature operator.
\end{example}

\begin{example} \label{example3}
Suppose that $T^l$ acts isometrically on a compact Riemannian manifold $M$ with curvature operator
bounded below by $K \Id$. Let $\Z_k \subset S^1$ be
the finite subgroup of order $k$.
For $\epsilon > 0$, let $\epsilon T^l$ denote the result of taking a flat $T^l$ and multiplying the Riemannian
metric by $\epsilon^2$. Put
$Y_{k, \epsilon} \: = \: M \times_{\Z_k^l} \epsilon T^l$. The curvature operator on $Y_{k,\epsilon}$ is
bounded below by $K \Id$. As $k \rightarrow \infty$ and $\epsilon \rightarrow 0$, the
Gromov-Hausdorff limit of $Y_{k,\epsilon}$ is $M/T^l$. Thus $M/T^l$ is a limit of Riemannian manifolds with 
curvature operators that are uniformly bounded from below.
\end{example}

\begin{example} \label{example4}
Suppose that a compact Lie group $G$ acts isometrically on a compact Riemannian manifold $M$
with nonnegative sectional curvature. Give $G$ a bi-invariant Riemannian metric.  For $\epsilon > 0$,
put $M_\epsilon \: = \: M \times_G \epsilon G$, equpped with the quotient Riemannian metric coming from $M \times \epsilon G$.
As $\epsilon \rightarrow 0$, the Gromov-Hausdorff limit of
$M_\epsilon$ is $M/G$.
For each $\epsilon$, the manifold $M_\epsilon$ has nonnegative sectional curvature. 
 Thus $M/G$ is a limit of Riemannian manifolds with nonnegative sectional curvature.
In contrast, Proposition \ref{prop2} in
Section \ref{section8} shows that if $G$ is nonabelian then
it is generally not true that the family  $\{M_\epsilon\}_{\epsilon \in (0,1]}$
has curvature operators that are uniformly bounded from below.
\end{example}

\begin{example} \label{example5}
Let $M$ be a compact Riemannian manifold. Let $G$ be a compact Lie group and 
let $P$ be a  principal $G$-bundle over $M$, with connection. Give $G$ a bi-invariant
Riemannian metric and give $P$ the associated connection metric $g$. For $\epsilon > 0$, let $(P, g_\epsilon)$ be the
result of multiplying the fiber Riemannian metric by $\epsilon^2$. The following facts can be read off
from (\ref{4.7}) below.
\begin{enumerate}
\item  If $M$ has nonnegative sectional curvature then $P$ has almost nonnegative sectional curvature :
for every $\sigma > 0$,
the manifold $(P, g_\epsilon)$ has sectional curvatures bounded below by $- \sigma$ if
$\epsilon$ is sufficiently small \cite[Section 2]{Fukaya-Yamaguchi (1992)}.
\item If $M$ has nonnegative curvature operator and $G$ is abelian then $P$ has almost nonnegative curvature
operator :
for every $\sigma > 0$,
the manifold $(P, g_\epsilon)$ has curvature operator bounded below by $- \sigma \Id$ if
$\epsilon$ is sufficiently small. 
\item If $M$ has nonnegative curvature operator and $G$ is nonabelian then
the manifolds $\{(P, g_\epsilon)\}_{\epsilon \in (0,1]}$ have curvature operators that are uniformly bounded below. 
However, it is generally not true that
for every $\sigma > 0$,
the manifold $(P, g_\epsilon)$ has curvature operator bounded below by $- \sigma \Id$ if
$\epsilon$ is sufficiently small.
\end{enumerate}
\end{example}

\begin{remark}
As seen, if $s : M \rightarrow B$ is a Riemannian submersion and $M$ has nonnegative curvature operator then $B$ need not have
nonnegative curvature operator.  Hence there cannot be a synthetic notion of nonnegative curvature operator which is
preserved under Gromov-Hausdorff limits, at least in the collapsing case.  This is in contrast to the situation with 
nonnegative sectional curvature.  It may be useful to think of collapsing with curvature operator bounded below as a
type of collapsing with sectional curvature bounded below, with some additional structure. 
\end{remark}

\section{fF-structures} \label{section3}

In this section we define fF-structures and give some examples.  The definition is based, of course, on the Cheeger-Gromov definition of F-structures.
We show that an invariant vertical metric always exists, and that an
invariant vertical metric can always be extended to an invariant metric.

We first give a definition of infinitesimal fF-structures in terms of sheaves.  Some readers may want to 
skip to Definition \ref{def4}  of an atlas.

Let $M$ be a smooth connected manifold. Let ${\mathcal V}$ denote the sheaf on $M$ of smooth vector fields, a sheaf of Lie algebras.
For $m \in M$, let ${\mathcal V}_m$ denote the stalk at $m$, i.e. the germs of vector fields at $m$, and let
$l_m : {\mathcal V}_m  \rightarrow T_mM$ be the localization map, i.e. evaluation at $m$.

\begin{definition} \label{def2}
An infinitesimal fF-structure on $M$ consists of
\begin{enumerate}
\item A sheaf ${\mathcal K}$ of finite-dimensional abelian Lie algebras on $M$,
\item An injective sheaf homomorphism $\eta : {\mathcal K} \rightarrow {\mathcal V}$ and
\item A subsheaf ${\mathcal D}$ of ${\mathcal V}$ so that for any open set $U \subset M$,
 ${\mathcal D}(U)$ consists of the sections of an integrable distribution $D_U \subset TU$,
\end{enumerate}
satisfying the conditions that
\begin{enumerate}
\item If $V_1 \in {\mathcal K}(U)$ and $V_2 \in {\mathcal D}(U)$ then $[\eta(V_1), V_2] \in {\mathcal D}(U)$, and
\item For every $m \in M$, $l_m(\eta_m({\mathcal K}_m)) \cap l_m({\mathcal D}_m)$ vanishes in $ T_mM$.
\end{enumerate}
\end{definition}

The first condition in Definition \ref{def2} is an equivariance statement about ${\mathcal D}$.
The second condition in Definition \ref{def2} says that a tangent vector at $m$ coming from the
infinitesimal Lie algebra action can never point in the direction of the distribution.

Given an infinitesimal fF-structure, a subinfinitesimal fF-structure is given by a subsheaf 
$\left( {\mathcal K}^\prime, {\mathcal D}^\prime \right)$ with
similar properties.

The infinitesimal fF-structure has {\em positive rank} if for all $m \in M$,
the pair $( l_m(\eta_m({\mathcal K}_m)), l_m({\mathcal D}_m)) \subset T_mM \oplus T_mM$ never vanishes. 
An infinitesimal fF-structure is {\em pure}
if the dimensions of ${\mathcal K}_m$ and $l_m({\mathcal D}_m)$ are constant in $m$.

\begin{definition} \label{def3}
An fF-structure on $M$ is an infinitesimal fF-structure with the property that for each $m \in M$ there
are a neighborhood  $U_m$ of $m$ and a finite normal cover $\pi_m : \widehat{U}_m \rightarrow U_m$, say with
covering group $\Gamma_m$, so that
\begin{enumerate}
\item There is an effective action $\rho : T_m \rightarrow \Diff(\widehat{U}_m)$ of a torus group $T_m$, equivariant with respect to a
homomorphism $\Gamma_m \rightarrow \Aut(T_m)$, and
\item There is a proper submersion $s_m : \widehat{U}_m \rightarrow \widehat{B}_m$, which is
$T_m \rtimes \Gamma_m$-equivariant, such that
\item If $\widehat{m} \in \pi_m^{-1}(m)$ then the action near $\widehat{m}$ of the Lie algebra ${\mathfrak t}_m$
coincides with the lift of $\eta_m({\mathcal K}_m)$, and
\item If $\widehat{m} \in \pi_m^{-1}(m)$ then the germs (at $\widehat{m}$)  of sections  of the vertical tangent bundle $\Ker(ds_m)$
coincide with the lift of ${\mathcal D}_m$.
\end{enumerate}
\end{definition}
In conditions (3) and (4) above, we use the fact that $\pi_m$ is a local diffeomorphism in order to lift vectors.

We say that the fF-structure has {\em positive rank} if the underlying infinitesimal fF-structure has positive rank.
We say that the fF-structure is {\em pure} if the underlying infinitesimal fF-structure is pure.

\begin{definition} \label{def4}
An atlas for an $fF$-structure on $M$ is given by a locally finite open cover $M \: = \: \bigcup_i U_i$
and for each $i$,
\begin{enumerate}
\item A finite normal cover
$\pi_i : \widehat{U}_i \rightarrow U_i$, say with covering group $\Gamma_i$,
\item An effective action $\rho_i : T_i \rightarrow \Diff(\widehat{U}_i)$ of a torus group $T_i$, which is
$\Gamma_i$-equivariant with respect to a homomorphism $\Gamma_i \rightarrow \Aut(T_{i})$  and
\item A  $T_i \rtimes \Gamma_i$-equivariant  proper submersion $s_i : \widehat{U}_i \rightarrow
\widehat{B}_i$ so that
\item The $T_i$-action is never vertical; given $X$ in the Lie algebra 
${\mathfrak t}_i$ and $\widehat{p} \in \widehat{U}_{i}$,
if the corresponding vector field $V_X$ has $V_X(\widehat{p}) \in \Ker(d{s_i})$ then
$V_X(\widehat{p}) \: = \: 0$. 
\end{enumerate}

This structure must have the following intersection property.  Suppose that $U_i \cap U_j \neq \emptyset$.
Put $U_{ij} \: = \: U_i \cap U_j$. Then there are
\begin{enumerate}
\item A finite normal $\Gamma_{ij}$-cover
$\pi_{ij} : \widehat{U}_{ij} \rightarrow U_{ij}$,
\item An effective action $\rho_{ij} : T_{ij} \rightarrow \Diff(\widehat{U}_{ij})$ of a torus group $T_{ij}$,
which is $\Gamma_{ij}$-equivariant
with respect to a homomorphism $\Gamma_{ij} \rightarrow \Aut(T_{ij})$,
\item  A  $T_{ij} \rtimes \Gamma_{ij}$-equivariant  proper submersion $s_{ij} : \widehat{U}_{ij} \rightarrow
\widehat{B}_{ij}$ so that the $T_{ij}$-action is never vertical,
\item A subgroup $T_{ij,i} \rtimes \Gamma_{ij,i} \subset T_{ij} \rtimes\Gamma_{ij} $ with a locally isomorphic surjective homomorphism
$T_{ij,i} \rtimes \Gamma_{ij,i} \rightarrow T_i \rtimes \Gamma_i$,
\item  A subgroup $T_{ij,j} \rtimes \Gamma_{ij,j} \subset T_{ij} \rtimes\Gamma_{ij} $ with a locally isomorphic surjective homomorphism
$T_{ij,j} \rtimes \Gamma_{ij,j} \rightarrow T_j \rtimes \Gamma_j$  and
\item Equivariant smooth maps 
$\alpha_{ij,i} : \widehat{U}_{ij} \rightarrow \widehat{U}_i$,
$\alpha_{ij,j} : \widehat{U}_{ij} \rightarrow \widehat{U}_j$,
$\beta_{ij,i} : \widehat{B}_{ij} \rightarrow \widehat{B}_i$,
$\beta_{ij,j} : \widehat{B}_{ij} \rightarrow \widehat{B}_j$
\end{enumerate}
so that
\begin{enumerate}
\item
$\pi_i \circ \alpha_{ij,i} \: = \: \pi_{ij}$,
\item
$\pi_j \circ \alpha_{ij,j} \: = \: \pi_{ij}$,
\item
$s_i \circ \alpha_{ij,i} \: = \: \beta_{ij,i} \circ s_{ij}$ and
\item
$s_j \circ \alpha_{ij,i} \: = \: \beta_{ij,j} \circ s_{ij}$.
\end{enumerate}
\end{definition}

In condition (6) above the equivariance of $\alpha_{ij,i}$, for example, is with respect to the
$T_{ij,i} \rtimes \Gamma_{ij,i}$-action on $\widehat{U}_{ij}$ and the
$T_{i} \rtimes \Gamma_{i}$-action on $\widehat{U}_i$, as linked by the
homomorphism in (4).

Any fF-structure admits a compatible atlas in the sense of Definition \ref{def4}; cf. \cite[p. 317]{Cheeger-Gromov (1986)}.  We can and will
assume that for each $m \in M$, we have $T_m \: = \: T_i$ for some $i$. 
The fF-structure has positive rank if and only if for all $i$, the $T_i$-action has no global fixed points
or the preimages $s_i^{-1}(\pt)$ have positive dimension. The fF-structure  is pure if and only if
$\dim(T_i)$ is independent of $i$ and $\dim(s_i^{-1}(\pt))$ is independent of $i$.

Given $m \in M$, define the vertical fiber $F_i(m) \subset M$ to be
the image under $\pi_i$ of the vertical fibers in $\widehat{U}_i$ that contain elements of $\pi_i^{-1}(m)$. That is, 
$F_i(m) \: = \: \pi_i ( s_i^{-1} (s_i ( \pi_i^{-1}(m))))$.

Given an fF-structure on $M$, suppose that the underlying infinitesimal fF-structure has a subinfinitesimal fF-structure $({\mathcal K}^\prime, {\mathcal D}^\prime)$.
We say that  $({\mathcal K}^\prime, {\mathcal D}^\prime)$ is {\em polarized} if the fF-structure admits
an atlas so that for each $i$, if we partition $U_i$ by the equivalence relation generated by flows of vector fields in $\eta({\mathcal K}^\prime(U_i))$,
then the ensuing orbits have constant dimension in $U_i$, as immersed submanifolds.

We do not assume that the lift of
$\eta({\mathcal K}^\prime(U_i))$ to $\widehat{U}_i$ generates a {\em closed} subgroup of the $T_{i}$-action on 
$\widehat{U}_i$. 
We also do not assume that the flows of ${\mathcal D}^\prime(U_i)$ generate {\em closed} orbits, or that 
$\dim( {\mathcal K}^\prime(U_i) )$ or $\dim(D^\prime_{U_i})$ are  independent of $i$.  If in addition
$({\mathcal K}^\prime, {\mathcal D}^\prime)$ is pure, i.e. if
$\dim( {\mathcal K}^\prime(U_i) )$ and $\dim(D^\prime_{U_i})$ are both independent of $i$, then
$({\mathcal K}^\prime, {\mathcal D}^\prime)$ is {\em pure polarized}. Our terminology differs slightly from that
for F-structures in \cite{Cheeger-Gromov (1986)}, where
one would say that the existence of such a substructure means that the {\em original} F-structure has a polarization, or a pure polarization.

\begin{example}
Let $M$ have an F-structure.  Then for any connected closed manifold $Z$, the product manifold $M \times Z$ has an fF-structure. 
It will have
positive rank if and only if $Z$ has positive dimension or the F-structure on $M$ has positive rank. 
\end{example}

\begin{example}
Let $Z_1$ and $Z_2$ be connected closed manifolds. Let $M_1$ and $M_2$ be connected manifolds. 
Let $B$ be a manifold.  Suppose that there are codimension-zero embeddings $(Z_1 \times B) \subset M_1$ and
$(Z_2 \times B) \subset M_2$. Then $M = (Z_2 \times M_1) \cup_{Z_1 \times Z_2 \times B} (Z_1 \times M_2)$ has an fF-structure.
There is an evident atlas whose groups $T_i$ are trivial and whose submersions are $(Z_2 \times M_1) \rightarrow M_1$,
$(Z_1 \times M_2) \rightarrow M_2$ and $(Z_1 \times Z_2 \times B) \rightarrow B$.
\end{example}

\begin{example}
Let $G$ be a connected 
compact Lie group. Let $K$ be a closed subgroup of $G$. Let $P$ be the total space of a principal $G \times T^k$-bundle.
Then $P/K$ has a pure polarized  fF-structure coming from the submersion $s : P/K \rightarrow P/G$, along with the remaining $T^k$-action.
 
\end{example}
 
\begin{lemma} \label{lem1}
Given an fF-structure, there is a compatible atlas with the following properties.
\begin{enumerate}
\item Each $U_i$ has compact closure.
\item If $m \in U_{i_1} \cap \ldots \cap U_{i_k}$ then for some ordering, we have
\begin{equation} \label{3.1}
(T_{i_1}, F_{i_1}(m)) \subset (T_{i_2}, F_{i_2}(m)) \subset \ldots \subset (T_{i_k}, F_{i_k}(m)).
\end{equation}
\item Given $m \in U_i$, there is at most one $j \neq i$ so that $m \in U_j$ and
$(T_{i}, F_{i}(m)) \: = \: (T_{j}, F_{j}(m))$.
\end{enumerate}
\end{lemma}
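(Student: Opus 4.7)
The plan is to start with a compatible atlas---whose existence is noted just after Definition \ref{def4}---and refine it in three successive stages, one per conclusion. First I would address (1): by paracompactness of $M$, pass to a locally finite refinement whose members have compact closure inside the original $U_i$'s, and restrict all the structure together with the intersection data of Definition \ref{def4} to the new cover. The axioms transport without change, and local finiteness is preserved.

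Next, for (2), I would introduce a partial order $\preceq$ on the finite set of local types $\{(T_i, F_i(m)) : m \in U_i\}$ at each $m$: declare $(T_i, F_i(m)) \preceq (T_j, F_j(m))$ when, after transfer to the intersection chart $\widehat{U}_{ij}$, the combined orbit-and-fiber for the $i$-data is contained in that for the $j$-data (using the subgroups $T_{ij, i}, T_{ij, j} \subset T_{ij}$ and the refining submersion $s_{ij}$). To force $\preceq$ to be a total order on every multi-intersection, whenever an incomparable pair $(i, j)$ occurs on an intersection I would adjoin a new chart modeled on $\widehat{U}_{ij}$ (whose type dominates both) and shrink $U_i, U_j$ so that the two now meet only where the new chart is present. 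Local finiteness together with induction on the maximal chain length should make the process terminate.

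For (3), if after Stage 2 three distinct charts at $m$ still share the same $(T, F)$-type, the intersection data of Definition \ref{def4} identifies all three with a common local model near $m$; a standard partition of unity then lets me shrink one of them off of $m$ without disturbing the chains obtained in Stage 2. Throughout, compactness of the closures from Stage 1 is what keeps the bookkeeping finite.

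The main obstacle is Stage 2. The intersection property of Definition \ref{def4} furnishes only a common refining chart, not an a priori nesting: the subgroups $T_{ij, i}, T_{ij, j}$ may be transverse inside $T_{ij}$, and the fibers of $s_{ij}$ need not embed into either of the original fiber structures. Producing a total chain therefore requires systematically inserting intermediate charts realizing the joins in the lattice of local types, and verifying that each insertion remains compatible with the rest of the atlas on all further overlaps. Orchestrating this combinatorial refinement so that it terminates---and so that it respects Stage 1---is the heart of the argument, and is where I expect any nontrivial work to lie.
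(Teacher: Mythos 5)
The paper's proof is a one-line reference to \cite[Lemma~1.2]{Cheeger-Gromov (1986)}, whose argument is constructive in a way that is genuinely different from what you propose. Cheeger and Gromov do not start from an arbitrary compatible atlas and then repair it; they \emph{build} the regular atlas from scratch, by stratifying $M$ according to local orbit type (dimension of the local torus orbit and of the local fiber), taking suitably sized invariant tubular neighborhoods of compact pieces of each stratum, and descending through the strata in order of dimension. The nesting in (\ref{3.1}) then holds automatically, by construction: a point $m'$ sitting inside tubular neighborhoods of two different strata has both associated types arising as sub-types of its stalk ${\mathcal K}_{m'}$ which are nested because one stratum lies in the closure of the other. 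Property (3) is handled at the same time by keeping the inner tubular neighborhoods thin. This is why the proof of Lemma \ref{lem2} speaks of a \emph{regular atlas} obtained in exactly this way; the existence of that atlas is the content of the lemma.

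Your Stage 1 (paracompactness and shrinking to compact closures) is fine and essentially what anyone would do. Your Stage 2, however, has the gap you yourself flagged, and I do not think it closes by the induction you suggest. When you insert a ``join'' chart modeled on $\widehat{U}_{ij}$ and shrink $U_i, U_j$ off of the incomparable overlap, the inserted chart now has to be compared with every other chart $U_k$ meeting it; its type at a point $m' \in U_{ij} \cap U_k$ is the join of the $i$- and $j$-types there and could well be incomparable with the $k$-type, so you may create fresh incomparable pairs. The quantity ``maximal chain length'' is not obviously monotone under this operation, and shrinking can manufacture new charts without bound near a badly-behaved point, so termination is not secured by the argument you sketch. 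The bound $\dim T_i \le \dim M$ alone does not save you, because the process also shrinks existing charts and adjoins new ones, and a single dimension bound does not prevent infinitely many insertions happening at different points. Moreover, even after a putative successful run, you would still need to verify that the new atlas satisfies the full intersection property of Definition \ref{def4} on \emph{all} multi-overlaps, which is not automatic from the pairwise surgeries you describe. The stratification-by-orbit-type construction avoids all of this because the nesting is built in rather than enforced post hoc. Your Stage 3 is roughly in the right spirit and is the easy part, but shrinking a chart ``off of $m$'' still needs the coverage of $m$ to be preserved; in the Cheeger-Gromov construction this is handled at the same time as (2), not as a separate pass.
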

\begin{proof}
The proof is the same as in \cite[Lemma 1.2]{Cheeger-Gromov (1986)}.
\end{proof}

The relation of  (\ref{3.1}) gives a partial ordering
on $\{U_\alpha\}$, in the sense that $U_\alpha \le U_\beta$ if for each $m \in U_\alpha \cap U_\beta$, we have
$(T_\alpha, F_\alpha(m)) \subset (T_\beta, F_\beta(m))$. Let $U_1$ be a maximal element of the partial ordering.  Let
$U_2$ be a maximal element among those that are left after removing $U_1$. Proceeding in this way,  we obtain a 
subcover $\{U_i\}$ of  $M$ with the property 
that if $i > j$ and $U_i \cap U_j \neq \emptyset$ then the restriction of $(T_i, F_i)$, to $U_i \cap U_j$, is contained in the restriction of
$(T_j, F_j)$ to $U_i \cap U_j$. We will call such an atlas a {\em regular atlas}.

The flows generated by the vector fields in the ${\mathcal D}(U)$'s generate a partition ${\mathcal P}$ of $M$ into
submanifolds, possibly of varying dimensions. If $m \in U_i$ then the vertical fiber $F_i(m)$ is contained in the submanifold 
$P_m \in {\mathcal P}$ that contains $m$. In fact, $P_m = \bigcup_{i : m \in U_i} F_i(m)$.

Given a collection $g^V$ of Riemannian metrics on the
submanifolds $P$ in ${\mathcal P}$, for any  atlas we obtain vertical Riemannian metrics on the submersions
$s_i : \widehat{U}_i \rightarrow \widehat{B}_i$. 

\begin{definition} \label{def5}
We say that $g^V$ is an {\em invariant vertical metric} on $M$
if for each $i$, the vertical Riemannian metric on $\widehat{U_i}$ is smooth and $T_i$-invariant. 
We say that $g^V$ has positive (nonnegative) curvature operator if for each $i$, the
vertical Riemannian metric on $\widehat{U_i}$ has positive (nonnegative) curvature operator.
\end{definition}

This notion is independent of the
choice of atlas.

\begin{lemma} \label{lem2}
An invariant vertical metric exists.
\end{lemma}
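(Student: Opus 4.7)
The plan is to construct $g^V$ chart by chart by averaging, and then patch the pieces together with a partition of unity, using the regular atlas supplied by Lemma \ref{lem1} to handle overlaps.

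On each $\widehat{U}_i$, I would start from an arbitrary smooth Riemannian metric on the vertical bundle $\Ker(ds_i)$ and average over the compact group $T_i \rtimes \Gamma_i$ to produce a smooth $T_i \rtimes \Gamma_i$-invariant vertical metric $\widehat{h}_i$; by $\Gamma_i$-equivariance this descends to a smooth vertical metric $h_i$ on $U_i$. Next I would pass to a regular atlas and exploit the nesting
$$
(T_{i_1}, F_{i_1}(m)) \subset \ldots \subset (T_{i_k}, F_{i_k}(m))
$$
from Lemma \ref{lem1}. On each nonempty overlap $U_{ij}$, the common cover $\widehat{U}_{ij}$ supports a compact group $T_{ij} \rtimes \Gamma_{ij}$ that maps locally isomorphically onto both $T_i \rtimes \Gamma_i$ and $T_j \rtimes \Gamma_j$ via the surjections in Definition \ref{def4}(4)--(5). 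Proceeding inductively from a maximal chart to smaller ones along the partial ordering, at each step I would average the putative vertical metric over this larger group; this adds the required extra invariance without destroying what has already been established, in the same manner as the F-structure construction of \cite[p. 318]{Cheeger-Gromov (1986)}.

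Having assembled compatible local invariant vertical metrics $h_i$, I would pick a locally finite partition of unity $\{\phi_i\}$ subordinate to $\{U_i\}$ and define
$$
g^V_m := \sum_{i : m \in U_i} \phi_i(m)\, h_i\big|_m.
$$
The crucial point is that at any $m \in M$ all the inner products $h_i|_m$ live on the same vector space: Definition \ref{def3}(4) identifies $\Ker(ds_i)_m$ (lifted through $\pi_i$) with $l_m(\mathcal D_m) = T_m P_m$ independently of $i$, so the sum is an honest convex combination and is positive definite on $T_m P_m$. Thus $g^V$ gives a Riemannian metric on each leaf $P \in \mathcal P$; after lifting to any $\widehat{U}_j$, each summand is $T_j$-invariant by the inductive construction, and hence so is $g^V$.

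The main obstacle is the compatibility induction in the second step: ensuring that enforcing $T_i$-invariance on $h_i$ does not spoil $T_j$-invariance on overlaps, and conversely. The intersection data in Definition \ref{def4}, together with the regular ordering from Lemma \ref{lem1}, make this manageable by exactly the scheme developed for F-structures in \cite{Cheeger-Gromov (1986)}.
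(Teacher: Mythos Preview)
Your partition-of-unity step rests on a false claim. You assert that Definition~\ref{def3}(4) forces $\Ker(ds_i)_{\widehat m}$ to coincide with $l_m(\mathcal D_m)=T_mP_m$ \emph{for every} chart $U_i$ containing $m$, but this holds only for the chart that realises the stalk. In a general (non-pure) fF-structure the vertical fibres $F_i(m)$ have different dimensions on overlaps --- that is exactly what the nesting in Lemma~\ref{lem1} records, and the paper says explicitly that $F_i(m)\subset P_m$ with $P_m=\bigcup_{i:m\in U_i}F_i(m)$. Consequently the inner products $h_i|_m$ live on nested but \emph{distinct} subspaces of $T_mP_m$, and the convex combination $\sum_i\phi_i(m)\,h_i|_m$ is not a positive-definite form on $T_mP_m$: at a point where the only charts with $\phi_i(m)>0$ have strictly smaller fibres, the sum is degenerate. (Even if that were fixed, you would also need the $\phi_i$ themselves to be invariant for the sum to be $T_j$-invariant; such functions are produced only in Lemma~\ref{lem3}, whose proof uses Lemma~\ref{lem2}.)

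The paper sidesteps both problems by replacing the partition of unity with an \emph{extension} argument along the regular ordering. One builds the metric on $U_1$ first (averaging over $T_1$), then on $U_2$ one \emph{extends} the already-defined metric from $\pi_2^{-1}(U_1\cap U_2)$ to all of $\widehat U_2$ and averages over $T_2$; since $T_2\subset T_1$ on the overlap, this second averaging does not disturb what was built on $U_1$. Iterating yields a globally defined invariant vertical metric with no patching and no need for invariant bump functions. Your inductive step~2 gestures at this mechanism, but step~3 then undoes it; you should drop the partition of unity entirely and argue by extension.
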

\begin{proof}
The proof is similar to that of \cite[Lemma 1.3]{Cheeger-Gromov (1986)}. For simplicity, we assume that $M$ is compact.
The proof can be easily modified to the noncompact case.

Let $\{ U_i \}$ be a regular atlas. After refining the atlas if necessary, we can assume that for each $i$,
the closure $\overline{U_i}$ has a normal $\Gamma_i$-cover $\widehat{\overline{U_i}}$ which fibers over a smooth
compact manifold-with-boundary $\widehat{\overline{B_i}}$.

Choose a vertical Riemannian metric on $\widehat{U}_{1}$ which extends smoothly to $\widehat{\overline{U_1}}$.
Average it with respect to the
$T_1$-action. Project to get an invariant vertical  metric on 
$U_{1}$.

Consider $U_2$. The invariant vertical metric on $U_1 \cap U_2$ 
(possibly empty) lifts to a vertical Riemannian metric on $\pi_2^{-1}(U_1 \cap U_2)$. Extend this to
a vertical Riemannian metric on $\widehat{U}_2$ which extends smoothly to $\widehat{\overline{U_2}}$.
Average it with respect to the $T_2$-action. This new averaging will not change the
vertical Riemannian metric on $\pi_2^{-1}(U_1 \cap U_2)$. Project to get an invariant vertical 
metric on $U_1 \cup U_2$ 
which agrees with the previous invariant vertical metric on $U_1 \cap U_2$.
Then repeat the process.
\end{proof}

\begin{definition} \label{def6}
Given an infinitesimal fF-structure, a smooth function $f$ on $M$ is {\em invariant} if for
all open $U \subset M$ and all $V \in \eta({\mathcal K}(U)) + {\mathcal D}(U)$, we have
$V f \Big|_U \: = \: 0$.

A Riemannian metric $g$ on $M$ is an {\em invariant metric} if for all open $U \subset M$,
\begin{enumerate}
\item If $V \in \eta({\mathcal K}(U))$ then ${\mathcal L}_V g \: = \: 0$ and $V$ is 
pointwise orthogonal to each element of ${\mathcal D}(U)$.
\item Given $m \in U$ and $e_1, e_2 \in T_mM$, if $e_1, e_2 \in D_U^\perp$ then for
any $V \in {\mathcal D}(U)$, we have $({\mathcal L}_V g)(e_1, e_2) \: = \: 0$.
\end{enumerate}
\end{definition}

Condition (1) in Definition \ref{def6} means that $g$ is invariant under the local torus action, and the generators
are orthogonal to the distribution.  Condition (2) in Definition \ref{def6} gives a local Riemannian submersion
structure, with fibers in the distribution direction.

Given an fF-structure on $M$, we say that $g$ is invariant if it is invariant for the underlying infinitesimal
fF-structure.  If  $\{U_i\}$ is an atlas then an invariant  Riemannian metric $g$ on $M$ restricts to a Riemannian metric on $U_i$, which lifts to
a Riemannian metric on $\widehat{U}_i$. This defines a $T_i \rtimes \Gamma_i$-invariant Riemannian submersion structure on
$s_i : \widehat{U}_i \rightarrow \widehat{B}_i$.

\begin{lemma} \label{lem3}
\begin{enumerate}
\item
Any invariant vertical metric is the restriction of an invariant metric on $M$ to the submanifolds in the
partition ${\mathcal P}$.
\item There are an atlas $\{U_i\}$ and smooth invariant compactly-supported functions $F_i : U_i \rightarrow
[0, 1]$ so that $M \: = \: \bigcup_i F_i^{-1}(1)$.
\end{enumerate}
\end{lemma}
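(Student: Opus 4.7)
The plan is to mimic the averaging construction of Lemma~\ref{lem2}, working inductively along a regular atlas $\{U_i\}$ from Lemma~\ref{lem1}, but now extending the vertical metric to a full invariant metric in part (1), and pulling back averaged base functions via the submersions $s_i$ in part (2).

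For part (1), I would inductively build invariant metrics $g_k$ on $V_k := U_1 \cup \dots \cup U_k$ whose restriction to the leaves of the partition ${\mathcal P}$ agrees with $g^V$. Having constructed $g_{k-1}$, lift it over $V_{k-1} \cap U_k$ to $\pi_k^{-1}(V_{k-1} \cap U_k) \subset \widehat{U}_k$. The regular atlas nesting $(T_k, F_k(m)) \subset (T_j, F_j(m))$ for $j < k$, together with the intersection data in Definition~\ref{def4}, identifies $T_k$ with a subgroup of each $T_j$ on $U_j \cap U_k$, so this lifted metric is automatically $T_k \rtimes \Gamma_k$-invariant. I then extend it smoothly to all of $\widehat{U}_k$ with its vertical part equal to the lift of $g^V$, so that $s_k$ is a Riemannian submersion with the prescribed fiber metric. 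Averaging the resulting metric over the compact group $T_k \rtimes \Gamma_k$ still has the correct vertical piece (since the lift of $g^V$ is already invariant) and still agrees with the lift of $g_{k-1}$ on the overlap (since the averaging acts trivially there). Descending back to $U_k$ gives the desired invariant extension $g_k$.

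For part (2), I would choose for each $i$ a smooth compactly supported function $\phi_i$ on $\widehat{B}_i$ that equals $1$ on a $(T_i \rtimes \Gamma_i)$-invariant relatively compact open set $W_i$, selected so that the sets $\pi_i(s_i^{-1}(W_i))$ cover $M$ as $i$ varies; a standard shrinking argument in the orbit spaces $\widehat{B}_i / (T_i \rtimes \Gamma_i)$ makes this choice possible. Averaging $\phi_i$ over $T_i \rtimes \Gamma_i$ yields an invariant function, still of compact support and still equal to $1$ on $W_i$. Pulling back via the proper submersion $s_i$ gives a compactly supported $T_i \rtimes \Gamma_i$-invariant function on $\widehat{U}_i$ that is constant on the fibers of $s_i$; it descends to the required invariant $F_i : U_i \to [0,1]$, which is killed by every section of $\eta({\mathcal K}(U_i))$ (by $T_i$-invariance) and by every section of ${\mathcal D}(U_i)$ (by constancy on fibers).

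The step I expect to be most delicate is the compatibility claim in (1): verifying that the lifted metric on the overlap $\pi_k^{-1}(V_{k-1} \cap U_k)$ is already $T_k \rtimes \Gamma_k$-invariant, so that the averaging over $T_k \rtimes \Gamma_k$ does not disturb it. This is precisely where the regular atlas property plays its role, via the identification of $T_k$ as a subgroup of the ambient $T_j$'s on overlaps. The overall argument closely parallels \cite[Lemma 1.3]{Cheeger-Gromov (1986)}, with the submersions $s_i$ supplying the extra vertical structure absent in the F-structure case.
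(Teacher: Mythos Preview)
Your inductive averaging scheme for part (1) misses a requirement of Definition~\ref{def6}: an invariant metric must have each torus generator $V\in\eta({\mathcal K}(U))$ \emph{orthogonal} to the vertical distribution ${\mathcal D}(U)$, not merely Killing. Averaging an arbitrary Riemannian submersion metric over $T_k\rtimes\Gamma_k$ makes it group-invariant, but does nothing to force the generating vector fields $V_X$ into the horizontal subbundle; a generator that had a nonzero vertical component before averaging will generally still have one afterward. The paper handles this by first using the slice theorem (and an induction over orbit strata) to build a $\Gamma_k$-invariant horizontal distribution ${\mathcal H}_k$ that \emph{contains} the $T_k$-generators, and only then forming $\widehat g^V\oplus s_k^*\widehat g^H$ relative to this ${\mathcal H}_k$. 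With the generators already horizontal, one checks that averaging preserves this orthogonality (since $T_k$ is abelian, $t_*V_X=V_X$ and $t_*$ preserves $\Ker(ds_k)$), so the result really is an invariant metric in the sense of Definition~\ref{def6}.

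Your construction for part (2) produces a function that is killed by $\eta({\mathcal K}(U_i))+{\mathcal D}(U_i)$, but ``invariant'' in Definition~\ref{def6} means killed by $\eta({\mathcal K}(U))+{\mathcal D}(U)$ for \emph{every} open $U$; this full invariance is exactly what is used later (e.g.\ in Section~\ref{section6}, where it is needed that $\pi_m^*F_i$ pulls back from $\widehat B_m$ for a \emph{maximal} $(T_m,F(m))$). On an overlap $U_i\cap U_j$ with $j<i$ in the regular ordering, the structure $(T_j,F_j)$ strictly contains $(T_i,F_i)$, so your $F_i$ would also have to be constant along the larger $s_j$-fibers and $T_j$-orbits --- which a pullback from $\widehat B_i$ has no reason to be. The paper's approach is genuinely different: having built the invariant metric $g$ in part~(1), it takes distances $d(\cdot,P_{i,j})$ in $g$ to partition submanifolds $P_{i,j}\in{\mathcal P}$ lying in $U_i$, composes with a bump function, and combines these into $F_i$. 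Because both $g$ and the submanifolds $P_{i,j}$ are intrinsic to the full sheaf structure (not just to the $U_i$ chart), the resulting functions are invariant for the entire fF-structure.
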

\begin{proof}
For simplicity, we assume that $M$ is compact. 
Let $g^V$ be an invariant vertical metric.  Let $M \: = \: \bigcup_i U_i$ be a regular atlas. We assume that
$\overline{U_i}$ has the properties in the proof of Lemma \ref{lem2}.
The lift of $g^V \big|_{U_1}$ gives a vertical Riemannian metric $\widehat{g}^V_1$ on $\widehat{U}_1$.
As  the generating vector fields of the $T_1$-action are never vertical, one can use the slice theorem (along with an
induction over the strata of the $T_1$-action) to
construct a $\Gamma_i$-invariant horizontal distribution
${\mathcal H}_1$ for $s_1 : \widehat{U}_1 \rightarrow \widehat{B}_1$ 
that extends smoothly to $\widehat{\overline{U_1}}$ and contains the generating
vector fields of the $T_1$-action.
Choose a Riemannian metric $\widehat{g}^H_1$ on $\widehat{B}_1$ that extends smoothly to
$\widehat{\overline{B_1}}$.  There is an induced Riemannian metric $\widehat{g}_1 \: = \: \widehat{g}^V_1 \oplus 
s_1^* \widehat{g}^H_1$ on $\widehat{U}_1$, for which ${\mathcal H}_1$ is orthogonal to $\Ker(ds_1)$.
Average $\widehat{g}_1$ with respect to $T_1 \rtimes \Gamma_1$. The result  pulls back from
an invariant metric $g_1$ on $U_1$.

Consider $U_2$. The lift of $g^V \big|_{U_2}$ gives a vertical Riemannian metric $\widehat{g}^V_2$ on
$\widehat{U}_2$. The lift of $g_1 \big|_{U_1 \cap U_2}$ gives a Riemannian metric $\widehat{g}_1^\prime$ on
$\pi_2^{-1}(U_1 \cap U_2)$ which restricts to $\widehat{g}^V_2$ on the fibers of $s_2$ in $\pi_2^{-1}(U_1 \cap U_2)$.
Let ${\mathcal H}_1^\prime$ be the orthogonal complement to $\Ker(ds_2)$ on $\pi_2^{-1}(U_1 \cap U_2)$,
with respect to $\widehat{g}_1^\prime$.  
As  the generating vector fields of the $T_2$-action are never vertical, one can 
construct a horizontal distribution
${\mathcal H}_2$ for $s_2$ that extends smoothly to $\widehat{\overline{U_2}}$, contains the generating
vector fields of the $T_2$-action and agrees with ${\mathcal H}_1^\prime$ on $\pi_2^{-1}(U_1 \cap U_2)$.

Put $\widehat{B}_1^\prime \: = \: s_2(\pi_2^{-1}(U_1 \cap U_2)) \subset \widehat{B}_2$.
There is a unique Riemannian metric $\widehat{g}^{H \prime}_1$ on
$\widehat{B}_1$ so that $\widehat{g}_1^\prime \: = \: \widehat{g}^V_2 \oplus s_2^* \widehat{g}^{H \prime}_1$
on $\pi_2^{-1}(U_1 \cap U_2)$.
Choose a Riemannian metric $\widehat{g}^H_2$ on $\widehat{B}_2$ that agrees with  $\widehat{g}^{H \prime}_1$ on
$\widehat{B}_1$ and that extends smoothly to
$\widehat{\overline{B_2}}$. 
 There is an induced Riemannian metric $\widehat{g}_2 \: = \: \widehat{g}^V_2 \oplus 
s_2^* \widehat{g}^H_2$ on $\widehat{U}_2$ for which ${\mathcal H}_2$ is orthogonal to $\Ker(ds_2)$.
Average $\widehat{g}_2$ with respect to $T_2 \rtimes \Gamma_2$.  The result  pulls back from
an invariant metric $g_2$ on $U_2$ which agrees with $g_1$ on $U_1 \cap U_2$. Repeat the process.
This produces an invariant metric $g$ on $M$ that restricts to $g^V$.

Fix a bump function $\phi : [0, \infty) \rightarrow [0, \infty)$ so that $\phi \Big|_{[0,1/2]} \: = \: 1$ and
$\phi \Big|_{[1, \infty)} \: = \: 0$. Given $i$,
let $\{P_{i,j}\}$ be a finite collection of  submanifolds in  ${\mathcal P}$ that lie in $U_i$. For $\epsilon_{i,j} >0$ small enough,
the function $A_{i,j}(m) \: = \: \phi \left( d(m, P_{i,j})/\epsilon_{i,j} \right)$ is smooth on $M$, and clearly invariant. 
Put
\begin{equation} \label{3.2}
F_i(m) \: = \: 1 - \phi \left( \sum_j A_{i,j}(m) \right).
\end{equation}
With an appropriate choice of the $P_{i,j}$'s and $\epsilon_{i,j}$'s, the ensuing $F_i$'s will satisfy the conclusion of the lemma.
\end{proof}

\section{Riemannian submersions} \label{section4}

In this section we collect some curvature formulas for Riemannian submersions, after warping both the fibers and the base.

Let $s : M \rightarrow B$ be a Riemannian submersion. Let $g^V$ denote the inner product on $T^VM \: = \: \Ker(ds)$ and let
$g_B$ denote the inner product on $TB$, so $g_M \: = \: g^V \oplus s^* g_B$.  Let $\{e_\alpha\}$ be a local orthonormal frame for the
horizontal space $T^HM$ that pulls back from $B$. Let $\{e_i\}$ be a local orthonormal frame for 
$T^VM$. Put $\{e_I\} \: = \: \{e_\alpha\} \cup \{e_i\}$ and let $\{\tau^I\}$ denote the
dual coframe.  Using the Einstein summation convention, we write
$\omega^I_{\: \: JK} \: = \: \tau^I \left( \nabla_{e_K} e_J \right)$
and $\omega^I_{\: \:J} \: = \: \omega^I_{\: \: JK} \tau^K$
so $\omega^I_{\: \: JK} + \omega^J_{\: \: IK} \: = \: 0$ and $d\tau^I + \omega^I_{\: \: J} \wedge \tau^J \: = \: 0$.
With $\Omega^I_{\: \:J} = d\Omega^I_{\: \:J} + \Omega^I_{\: \:K} \wedge \Omega^K_{\: \:J}$, we write
$\Omega^I_{\: \:J} = \frac12 R^I_{\: \: JKL} \tau^K \wedge \tau^L$. 
Let $R_{V}$ denote the curvature tensor of a fiber $s^{-1}(b)$ in its induced Riemannian metric and let
$R_B$ denote the curvature tensor of $B$.

The fundamental tensors of the Riemannian
submersion are the curvature tensor $A$ and the second fundamental form $T$
\cite[Chapter 9C]{Besse (1987)}. These  satisfy the
symmetries
\begin{equation} \label{4.1}
\omega_{\alpha \beta i} \: = \: A_{\alpha \beta i} \: = \: - A_{\beta \alpha i} \: = \: - A_{\beta i \alpha} \: = \: A_{i \beta \alpha} \: = \: A_{\alpha i \beta} \: = \: - A_{i \alpha \beta}
\end{equation}
and
\begin{equation} \label{4.2}
\omega_{\alpha ij} \: = \: T_{\alpha ij} \: = \: T_{\alpha ji} \: = \: - T_{j \alpha i} \: = \: - T_{i \alpha j}.
\end{equation}

Given $f,h \in C^\infty(B)$, put $\widetilde{e}_i \: = \: e^{-f} e_i$ and $\widetilde{e}_\alpha \: = \: e^{h}  e_\alpha$. Then
$\{\widetilde{e}_I\}$ is an orthonormal basis for the metric $\widetilde{g} \: = \: e^{2f}  g^V + e^{-2h}  s^* g_B$.
Let $\widetilde{R}^I_{\: \: JKL}$ denote the components of the curvature tensor of $\widetilde{g}$, 
written in terms of the basis $\{\widetilde{e}_I\}$. One finds

\begin{align} \label{4.3}
\widetilde{A}^i_{\: \: \alpha \beta} \: = \: & e^{f+2h}  {A}^i_{\: \: \alpha \beta} \\
\widetilde{T}^i_{\: \: \alpha j} \: = \: &  e^{h} \left( {T}^i_{\: \: \alpha j} + \delta^i_{\: \: j} e_\alpha f \right)  \notag \\
\widetilde{\omega}^i_{\: \: jk} \: = \: & e^{-f} \omega^i_{\: \: jk} \notag \\
\widetilde{\omega}^i_{\: \: j\alpha} \: = \: & e^{-h} \omega^i_{\: \: j\alpha} \notag \\
\widetilde{\omega}^\alpha_{\: \: \beta \gamma} \: = \: & e^{-h} \left( \omega^\alpha_{\: \: \beta \gamma} +
\delta^\alpha_{\: \: \gamma} e_\beta h - \delta_{\beta \gamma} e_\alpha h \right) \notag
\end{align}
and
\begin{align} \label{4.4}
\widetilde{R}^i_{\: \: jkl} \: = \: & e^{-2f} R^i_{V  jkl} + e^{2h} \left[ T^i_{\: \: \alpha k} T^\alpha_{\: \: jl} -
 T^i_{\: \: \alpha l} T^\alpha_{\: \: jk} + e_\alpha f \left( \delta^i_{\: \: k} T^\alpha_{\: \: jl} - \delta_{jl} 
T^i_{\: \: \alpha k} - \delta^i_{\: \: l} T^\alpha_{\: \:jk} + \delta_{jk} T^i_{\: \: \alpha l} \right) \right. \\
& \left. - |\nabla f|^2 \left( \delta^i_{\: \: k} \delta_{jl} - \delta^i_{\: \: l} \delta_{jk} \right) \right] \notag
 \\
\widetilde{R}^i_{\: \: \alpha jk} \: = \: & e^{-f+h} \left( \nabla_j T^i_{\: \: \alpha k} - \nabla_k T^i_{\: \: \alpha j} \right)
+ e^{f+3h}  \left[ T^i_{\: \: \beta j} A^\beta_{\: \: \alpha k} - T^i_{\: \: \beta k} A^\beta_{\: \: \alpha j}
+ e_\beta f \left( \delta^i_{\: \: j} A^\beta_{\: \: \alpha k} - \delta^i_{\: \: k} A^\beta_{\: \: \alpha j} \right)
 \right] \notag  
\\
\widetilde{R}^i_{\: \: \alpha j \beta} \: = \: & e^{2h} \left[ - \nabla_\beta T^i_{\: \: \alpha j} + \nabla_j A^i_{\: \: \alpha \beta} - T^i_{\: \: \alpha k} T^k_{\: \: \beta j} \right. \notag \\
&  - (\nabla_\alpha \nabla_\beta f + e_\alpha f \: e_\beta f +  e_\alpha f \: e_\beta h +  e_\alpha h \: e_\beta f - \langle \nabla f, \nabla h \rangle \delta_{\alpha \beta})
 \delta^i_{ \: \: j} \notag \\
& \left.  - T^i_{\: \: \alpha j} e_\beta (f+h)  - T^i_{ \: \: \beta j} e_\alpha (f+h) \right]
- e^{2f+4h} A^i_{\: \: \gamma \beta}
A^\gamma_{\: \: \alpha j} 
\notag 
\\
\widetilde{R}^\alpha_{\: \: \beta i j} \: = \: & e^{2h} \left( \nabla_i A^\alpha_{\: \: \beta j} - \nabla_j A^\alpha_{\: \: \beta i} 
+ T^\alpha_{\: \: ki} T^k_{\: \: \beta j} - T^\alpha_{\: \: kj} T^k_{\: \: \beta i}  \right)
+ e^{2f+4h}  \left( A^\alpha_{\: \: \gamma i} A^\gamma_{\: \: \beta j} - A^\alpha_{\: \: \gamma j} A^\gamma_{\: \: \beta i} \right) \notag
\\
\widetilde{R}^\alpha_{\: \: \beta \gamma i} \: = \: & e^{f+3h} \left[ \nabla_\gamma A^\alpha_{\: \: \beta i}
- A^\alpha_{\: \: \beta k} T^k_{\: \: \gamma i} - A^k_{\: \: \beta \gamma} T^\alpha_{\: \: ki} + A^\alpha_{\: \: k \gamma} T^k_{\: \: \beta i} \right. \notag \\
& \left.  + 2 A^\alpha_{\: \: \beta i} e_\gamma(f+h)  + A^\alpha_{\: \: \gamma i} e_\beta(f+h) - A^\beta_{\: \: \gamma i} e_\alpha(f+h) -
A^\alpha_{\: \: \phi i} \delta_{\beta \gamma} e_\phi h
+  A^\beta_{\: \: \phi i} \delta_{\alpha \gamma} e_\phi h \right] \notag \\
\widetilde{R}^\alpha_{\: \: \beta \gamma \delta} \: = \: & e^{2h} \left[ {R}^\alpha_{B \beta \gamma \delta} - \delta^\alpha_{\: \: \delta}
(\nabla_\beta \nabla_\gamma h + e_\beta h \:  e_\gamma h) + \delta^\alpha_{\: \: \gamma}
(\nabla_\beta \nabla_\delta h + e_\beta h  \: e_\delta h) \right. \notag \\
& \left.  + \delta^\beta_{\: \: \delta}
(\nabla_\alpha \nabla_\gamma h + e_\alpha h \: e_\gamma h) - \delta^\beta_{\: \: \gamma}
(\nabla_\alpha \nabla_\delta h + e_\alpha h \: e_\delta h) - |\nabla h|^2 \left( \delta^\alpha_{\: \: \gamma} \delta_{\beta \delta} -
\delta^\alpha_{\: \: \delta} \delta_{\beta \gamma} \right)
\right] \notag \\
& + e^{2f+4h} 
\left(  2 A^\alpha_{\: \: \beta i} A^i_{\: \: \gamma \delta} - A^\alpha_{\: \: i \delta} A^i_{\: \: \beta \gamma}
+ A^\alpha_{\: \: i \gamma} A^i_{\: \: \beta \delta} \right). \notag
\end{align}
Here the covariant derivatives are with respect to the projected connection on $T^V M \oplus s^* TB$. That is,
\begin{align} \label{4.5}
\nabla_j T^i_{\: \: \alpha k} \: = \: & e_j T^i_{\: \: \alpha k} + \omega^i_{\: \: lj} T^l_{\: \: \alpha k} - \omega^l_{\: \: kj} T^i_{\: \: \alpha l}, \\
\nabla_\beta T^i_{\: \: \alpha j} \: = \: & e_\beta T^i_{\: \: \alpha j} + \omega^i_{\: \: k \beta} T^k_{\: \: \alpha j} -
\omega^\gamma_{\: \: \alpha \beta} T^i_{\: \: \gamma j} - \omega^k_{\: \: j \beta} T^i_{\: \: \alpha k}, \notag \\
\nabla_j A^i_{\: \: \alpha \beta} \: = \: & e_j A^i_{\: \: \alpha \beta} + \omega^i_{\: \: kj} A^k_{\: \: \alpha \beta}, \notag \\
\nabla_\gamma A^\alpha_{\: \: \beta i} \: = \: & e_\gamma A^\alpha_{\: \: \beta i} + \omega^\alpha_{\: \: \delta \gamma} A^\delta_{\: \: \beta i}
- \omega^\delta_{\: \: \beta \gamma} A^\alpha_{\: \: \delta i} - \omega^j_{\: \: i \gamma} A^\alpha_{\: \: \beta j}. \notag
\end{align}

In particular, if $h \: = \: 0$ then the equations simplify to
\begin{align} \label{4.6}
\widetilde{R}^i_{\: \: jkl} \: = \: & e^{-2f} R^i_{V  jkl} +  T^i_{\: \: \alpha k} T^\alpha_{\: \: jl} -
 T^i_{\: \: \alpha l} T^\alpha_{\: \: jk} + e_\alpha f \left( \delta^i_{\: \: k} T^\alpha_{\: \: jl} - \delta_{jl} 
T^i_{\: \: \alpha k} - \delta^i_{\: \: l} T^\alpha_{\: \:jk} + \delta_{jk} T^i_{\: \: \alpha l} \right) . \\
&  - |\nabla f|^2 \left( \delta^i_{\: \: k} \delta_{jl} - \delta^i_{\: \: l} \delta_{jk} \right)  \notag
 \\
 \widetilde{R}^i_{\: \: \alpha jk} \: = \: & e^{-f} \left( \nabla_j T^i_{\: \: \alpha k} - \nabla_k T^i_{\: \: \alpha j} \right) \notag \\
& + e^{f}  \left[ T^i_{\: \: \beta j} A^\beta_{\: \: \alpha k} - T^i_{\: \: \beta k} A^\beta_{\: \: \alpha j}
+ e_\beta f \left( \delta^i_{\: \: j} A^\beta_{\: \: \alpha k} - \delta^i_{\: \: k} A^\beta_{\: \: \alpha j} \right)
 \right] \notag  
\\
\widetilde{R}^i_{\: \: \alpha j \beta} \: = \: & - \nabla_\beta T^i_{\: \: \alpha j} + \nabla_j A^i_{\: \: \alpha \beta} - T^i_{\: \: \alpha k} T^k_{\: \: \beta j} 
 - (\nabla_\alpha \nabla_\beta f + e_\alpha f e_\beta f) \notag \\
&  - T^i_{\: \: \alpha j} e_\beta f  - T^i_{ \: \: \beta j} e_\alpha f 
- e^{2f} A^i_{\: \: \gamma \beta}
A^\gamma_{\: \: \alpha j} 
\notag 
\\
\widetilde{R}^\alpha_{\: \: \beta i j} \: = \: &  \nabla_i A^\alpha_{\: \: \beta j} - \nabla_j A^\alpha_{\: \: \beta i} 
+ T^\alpha_{\: \: ki} T^k_{\: \: \beta j} - T^\alpha_{\: \: kj} T^k_{\: \: \beta i} 
+ e^{2f}  \left( A^\alpha_{\: \: \gamma i} A^\gamma_{\: \: \beta j} - A^\alpha_{\: \: \gamma j} A^\gamma_{\: \: \beta i} \right) \notag
\\
\widetilde{R}^\alpha_{\: \: \beta \gamma i} \: = \: & e^{f} \left( \nabla_\gamma A^\alpha_{\: \: \beta i}
- A^\alpha_{\: \: \beta k} T^k_{\: \: \gamma i} - A^k_{\: \: \beta \gamma} T^\alpha_{\: \: ki} + A^\alpha_{\: \: k \gamma} T^k_{\: \: \beta i} \right. \notag \\
& \left.  + 2 A^\alpha_{\: \: \beta i} e_\gamma f  + A^\alpha_{\: \: \gamma i} e_\beta f - A^\beta_{\: \: \gamma i} e_\alpha f 
 \right) \notag \\
\widetilde{R}^\alpha_{\: \: \beta \gamma \delta} \: = \: &  {R}^\alpha_{B \beta \gamma \delta}
+ e^{2f} 
\left(  2 A^\alpha_{\: \: \beta i} A^i_{\: \: \gamma \delta} - A^\alpha_{\: \: i \delta} A^i_{\: \: \beta \gamma}
+ A^\alpha_{\: \: i \gamma} A^i_{\: \: \beta \delta} \right), \notag
\end{align}
If in addition the fibers of $s$ are totally geodesic and $f$ is constant then the equations further simplify to
\begin{align} \label{4.7}
\widetilde{R}^i_{\: \: jkl} \: = \: & e^{-2f} R^i_{V  jkl} 
 \\
 \widetilde{R}^i_{\: \: \alpha jk} \: = \: & 0
 \notag  
\\
\widetilde{R}^i_{\: \: \alpha j \beta} \: = \: & \nabla_j A^i_{\: \: \alpha \beta}
- e^{2f} A^i_{\: \: \gamma \beta}
A^\gamma_{\: \: \alpha j} 
\notag 
\\
\widetilde{R}^\alpha_{\: \: \beta i j} \: = \: &  \nabla_i A^\alpha_{\: \: \beta j} - \nabla_j A^\alpha_{\: \: \beta i} 
+ e^{2f}  \left( A^\alpha_{\: \: \gamma i} A^\gamma_{\: \: \beta j} - A^\alpha_{\: \: \gamma j} A^\gamma_{\: \: \beta i} \right) \notag
\\
\widetilde{R}^\alpha_{\: \: \beta \gamma i} \: = \: & e^{f} \nabla_\gamma A^\alpha_{\: \: \beta i} \notag \\
\widetilde{R}^\alpha_{\: \: \beta \gamma \delta} \: = \: &  {R}^\alpha_{B \beta \gamma \delta}
+ e^{2f} 
\left(  2 A^\alpha_{\: \: \beta i} A^i_{\: \: \gamma \delta} - A^\alpha_{\: \: i \delta} A^i_{\: \: \beta \gamma}
+ A^\alpha_{\: \: i \gamma} A^i_{\: \: \beta \delta} \right), \notag
\end{align}

\section{Bounded diameter collapse} \label{section5}

In this section we prove Theorem \ref{thm1}. As in \cite[Section 2]{Cheeger-Gromov (1986)} we shrink the Riemannian metric
in certain directions. We construct a (local) double
fibration and look at the curvature formulas that arise when simultaneously shrinking the various fibers.  We identify the terms that
could cause the curvature operator to become unbounded below.  In order to get rid of these terms, we
show that the structure group of the main fiber bundle can be reduced to actions on the fiber that are
affine in the flat directions of the fiber and isometric in the other directions. The construction of this reduction
uses the Ricci flow.

As the fF-structure is pure, after passing to a holonomy cover there are
\begin{enumerate}
\item A normal  cover $\pi : \widehat{M} \rightarrow M$, say with covering group $\Gamma$ (not necessarily finite),
\item A $T^k$-action on $\widehat{M}$,
\item A homomorphism $\rho : \Gamma \rightarrow \Aut(T^k)$ and
\item A proper submersion $s : \widehat{M} \rightarrow \widehat{B}$ which is $T^k \rtimes \Gamma$-equivariant.
\end{enumerate}
Let $Z$ denote the fiber of $s$. From the positive rank assumption,  if $\dim(Z) \: = \: 0$ then $k > 0$. The
orbit space is $\widehat{B}/(T^k \rtimes \Gamma)$.

By assumption, there is an invariant vertical metric $g^V$ with nonnegative curvature operator.
Let $g$ be an invariant metric on $M$ that extends $g^V$.  Then its lift $\widehat{g}$ to $\widehat{M}$ makes $s$ into  a 
$T^k \rtimes \Gamma$-equivariant
Riemannian submersion. 

Suppose first that $T^k$ acts freely on $\widehat{M}$.  Write $\widehat{g}$ as an orthogonal  sum
\begin{equation} \label{5.1}
\widehat{g} \: = \: \widehat{g}_1 +
\widehat{g}_2 + \widehat{g}_3,
\end{equation}
where 
\begin{enumerate}
\item $\widehat{g}_1$ is the restriction of $\widehat{g}$ to $\Ker(ds)$,
\item $\widehat{g}_2$ is the restriction of $\widehat{g}$ to the generators of the $T^k$-action and
\item $\widehat{g}_3$ is the restriction of $\widehat{g}$ to the orthogonal complement of the direct sum of $\Ker(ds)$ with the
generators of the $T^k$-action.
\end{enumerate}
Given $\epsilon > 0$, put 
\begin{equation} \label{5.2}
\widehat{g}_\epsilon \: = \:  \epsilon^2 \widehat{g}_1 +
\epsilon^2 \widehat{g}_2 +  \widehat{g}_3.
\end{equation}
Then $\widehat{g}_\epsilon$ is the pullback to $\widehat{M}$ of an
invariant Riemannian metric $g_\epsilon$ on $M$.

There is a commutative diagram
\begin{align} \label{5.3}
&\: \:  \: \:  \:  \: \: \: \: \: \: \: \: \: \: \: \: \: \widehat{M}  \\
& \:  \: \: \: \: \:  s_3 \swarrow\: \: \: \: \:  \: \: \: \: \: \searrow s  \notag \\
& \widehat{M}/T^k \: \: \: \: \: \: \: \: \: \:  \: \:  \:  \: \: \: \: \: \: \: \widehat{B} \notag \\
&  \:  \: \: \: \: \: s_2 \searrow\: \: \: \: \:  \: \: \: \: \: \swarrow s_1  \notag \\
& \: \:  \: \:  \:  \: \: \: \: \: \: \: \: \: \: \:  \widehat{B}/T^k \notag
\end{align}
of Riemannian submersions.  Let 
\begin{enumerate}
\item $\{x^\alpha\}$ be local coordinates on $\widehat{B} / T^k$, 
\item $\{x^\alpha, x^i\}$ be local coordinates on $\widehat{B}$ and 
\item $\{x^\alpha, x^I\}$ be local coordinates
on $\widehat{M} / T^k$,
\end{enumerate} so $\{x^\alpha, x^i, x^I\}$ are local coordinates on $\widehat{M}$. Let
\begin{enumerate}
\item $\{\tau^\alpha\}$ be an orthonormal local collection of $1$-forms on $\widehat{M}$ that pull back from
$\widehat{B} / T^k$, 
\item $\{\tau^\alpha, \tau^i\}$ be an orthonormal local collection of $1$-forms on
$\widehat{M}$ that pull back from $\widehat{B}$ and 
\item $\{\tau^\alpha, \tau^I\}$ be an orthonormal local collection of $1$-forms on
$\widehat{M}$ that pull back from $\widehat{M}/T^k$. 
\end{enumerate}
(Here the index $I$ has a different use than in Section \ref{section4}.)
Then $\{\tau^\alpha, \tau^i, \tau^I\}$ is a local orthonormal basis of $1$-forms on $\widehat{M}$.
We indicate the dependence of the $1$-forms on the coordinates by
\begin{enumerate}
\item $\tau^\alpha \: = \: \tau^\alpha(x^\beta)$,
\item $\tau^i \: = \: \tau^i(x^\beta, x^j)$ and
\item $\tau^I \: = \: \tau^I(x^\beta, x^J)$.
\end{enumerate}

 Let $\widetilde{R}_\epsilon$ denote the curvature tensor of
$\widehat{g}_\epsilon$.  We express components of $\widetilde{R}_\epsilon$ in terms of an
orthonormal frame for $\widehat{g}_\epsilon$. Put $f \: = \: \log \epsilon$. The components of
$\widetilde{R}_\epsilon$ that only have indices of type $\alpha$ and $i$, and at least one
index of type $i$, can be read off from (\ref{4.6}).  Similarly, the components of
$\widetilde{R}_\epsilon$ that only have indices of type $\alpha$ and $I$, and at least one
index of type $I$, can be read off from (\ref{4.6}).  The other nonzero components of
$\widetilde{R}_\epsilon$ are
\begin{align} \label{5.4}
\widetilde{R}^\alpha_{\: \: \beta \gamma \delta} \: = \: & {R}^\alpha_{B \beta \gamma \delta} + \epsilon^2 
\left( A^\alpha_{\: \: i \delta} A^i_{\: \: \beta \gamma} - 2 A^\alpha_{\: \: \beta i} A^i_{\: \: \gamma \delta} 
- A^\alpha_{\: \: i \gamma} A^i_{\: \: \beta \delta} \right)  \\
&  + \epsilon^2 \left(
A^\alpha_{\: \: I \delta} A^I_{\: \: \beta \gamma} - 2 A^\alpha_{\: \: \beta i} A^I_{\: \: \gamma \delta} 
- A^\alpha_{\: \: I \gamma} A^I_{\: \: \beta \delta} \right), \notag \\
\widetilde{R}^i_{\: \: I \alpha \beta} \: = \: & \epsilon^2 \left( A^i_{\: \: \gamma \alpha} A^\gamma_{\: \: I \beta} -
A^i_{\: \: \gamma \beta} A^\gamma_{\: \: I \alpha} \right), \notag \\
\widetilde{R}^i_{\: \: \alpha \beta I} \: = \: & \epsilon^2 A^i_{\: \: \gamma \beta} A^\gamma_{\: \: \alpha I}, \notag \\
\widetilde{R}^i_{\: \: IjJ} \: = \: & 
\epsilon^2  {A}^i_{\: \: \alpha j} {A}^\alpha_{\: \: IJ}, \notag  \\
\widetilde{R}^i_{\: \: I \alpha J} \: = \: & \epsilon A^i_{\: \: \beta \alpha} T^\beta_{\: \: IJ}, \notag \\
\widetilde{R}^i_{\: \: \alpha j I} \: = \: & \epsilon T^i_{\: \: \beta j} A^\beta_{\: \: \alpha I}. \notag
\end{align} 

As $\epsilon$ goes to zero, we see from (\ref{4.6}) that the only possibly divergent terms in the curvature operator come from the tensor $\widetilde{S}$
which has the symmetries of the curvature tensor, and whose nonzero entries are given by
\begin{align} \label{5.5}
\widetilde{S}^i_{\: \: jkl} \: = \: & \epsilon^{-2} R^i_{V jkl},\\
\widetilde{S}^i_{\: \: \alpha jk} \: = \: & \epsilon^{-1} \left( \nabla_j T^i_{\: \: \alpha k} - \nabla_k  T^i_{\: \: \alpha j} \right), \notag \\
 \widetilde{S}^I_{\: \: \alpha JK} \: = \: & \epsilon^{-1} \left( \nabla_J T^I_{\: \: \alpha K} - \nabla_K  T^I_{\: \: \alpha J} \right). \notag 
\end{align}

From (\ref{4.5}),
\begin{align} \label{5.6}
\nabla_j T^i_{\: \: \alpha k} \: = \: &
e_j T^i_{\: \: \alpha k} + \omega^i_{\: \: lj} T^i_{\: \: \alpha k} - \omega^i_{\: \: kj} T^i_{\: \: \alpha l}, \\
\nabla_J T^I_{\: \: \alpha K} \: = \: &
e_J T^I_{\: \: \alpha K} + \omega^I_{\: \: LJ} T^L_{\: \: \alpha K} - \omega^L_{\: \: KJ} T^I_{\: \: \alpha L}, \notag
\end{align}
and similarly for  $\nabla_k  T^i_{\: \: \alpha j}$ and $\nabla_K  T^I_{\: \: \alpha J}$. From the $T^k$-invariance, by
taking $\{x^i\}$ to be linear coordinates on the tori 
we can assume that $e_j T^i_{\: \: \alpha k} \: = \: 0$ and
$\omega^i_{\: \: lj} \: = \: \omega^l_{\: \: kj} \: = \: 0$. Thus $\widetilde{S}^i_{\: \: \alpha jk} \: = \: 0$.

From Section \ref{section2}, a fiber $Z$ has a finite normal cover $Z^\prime$, say with covering group $\Delta$,
which admits a $\Delta$-invariant isometric product metric 
$Z^\prime \: = \: T^a \times \prod_{m\: = \:1}^N  F_m$, where $T^a$ has a flat metric and
each $F_m$ is a compact irreducible symmetric space. 
Thinking of $Z$ as a smooth manifold,
let ${\mathcal C}$ be the space of $\Delta$-invariant structures on $Z^\prime$ consisting of a product decomposition
$Z^\prime \: = \: T^a \times \prod_{m\: = \:1}^N  F_m$, a complete affine structure on 
$T^a$ with trivial holonomy, and an irreducible symmetric space metric on each $F_m$ with volume one.
\begin{lemma} \label{newlem}
$\Diff(Z)$ acts transitively on ${\mathcal C}$.
\end{lemma}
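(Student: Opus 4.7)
The plan is to produce, given $P_1, P_2 \in \mathcal{C}$, a $\Delta$-equivariant diffeomorphism $\widehat\phi : Z' \to Z'$ with $\widehat\phi^* P_2 = P_1$; this descends to the required $\phi \in \Diff(Z)$ carrying $P_1$ to $P_2$. I first construct a non-equivariant match, then correct it for equivariance.

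Each $P_i$ equips $Z'$ with the structure of a compact Riemannian symmetric space of type $T^a \times \prod_m F_m$ (on the torus factor we pick any flat metric compatible with the affine structure). By the de Rham decomposition theorem, this splitting is unique up to permutation of isometric factors, so the multiset of factor diffeomorphism types depends only on $Z'$ and is the same for $P_1$ and $P_2$. Any two symmetric metrics of volume one on a compact irreducible symmetric space are related by a diffeomorphism: the isotropy representation is irreducible, so the invariant inner product on the tangent space at a point is unique up to scale, and after normalizing the volume the two metrics coincide up to an isometry, which pulls back to a diffeomorphism. Any two complete flat affine structures on $T^a$ with trivial holonomy are related by a diffeomorphism coming from an element of $\Aff(\mathbb{R}^a)$ carrying one defining lattice to the other. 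Combining factor-wise matches and a diffeomorphism identifying the two smooth product decompositions, we obtain some $\psi \in \Diff(Z')$ with $\psi^* P_2 = P_1$, ignoring $\Delta$-equivariance.

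To achieve $\Delta$-equivariance, observe that for each $\delta \in \Delta$ the diffeomorphism $\psi^{-1} \delta \psi$ preserves $P_1$, so we have two homomorphisms $\Delta \to \Diff(Z')^{P_1}$: the original inclusion and $\delta \mapsto \psi^{-1}\delta\psi$. We seek $k \in \Diff(Z')^{P_1}$ conjugating the second to the first; then $\psi k$ will be $\Delta$-equivariant. Since $\Diff(Z')^{P_1}$ contains the transitive isometry group $\Isom(Z', g_1)^0$ of the symmetric space $(Z',g_1)$ as well as all fiberwise affine automorphisms on the torus factor and isometries on each $F_m$, one can build $k$ by a Cheeger-Gromov averaging argument in the style of Lemmas \ref{lem2} and \ref{lem3}, working over a regular atlas as in Lemma \ref{lem1} and adjusting $\psi$ successively over each open set by averaging with respect to the finite $\Delta$-action in that chart.

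The main obstacle is the $\Delta$-equivariance in the second step. The factor-wise rigidity in the first step is classical for irreducible symmetric spaces and for flat affine tori. For the second step, the crucial feature is that $\Delta$ is finite and the stabilizer $\Diff(Z')^{P_1}$ is large enough, through its transitive isometry subgroup, to allow the cocycle $\delta \mapsto \psi^{-1}\delta\psi\delta^{-1}$ to be trivialized by the averaging construction.
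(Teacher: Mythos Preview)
Your first step is fine: the factor-wise rigidity of irreducible volume-one symmetric spaces and of complete flat affine tori gives a non-equivariant $\psi \in \Diff(Z')$ with $\psi^* P_2 = P_1$.

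The second step, however, is a genuine gap. You need to conjugate the two homomorphisms $\rho_1,\rho_2:\Delta\to \Aut(Z',P_1)$, where $\rho_1$ is the deck action and $\rho_2(\delta)=\psi^{-1}\delta\psi$. You propose to do this by ``a Cheeger--Gromov averaging argument in the style of Lemmas~\ref{lem2} and~\ref{lem3}, working over a regular atlas as in Lemma~\ref{lem1}.'' But Lemmas~\ref{lem1}--\ref{lem3} concern the ambient manifold $M$ with its fF-atlas: they build invariant metrics and cutoff functions by extending over successive charts and averaging with respect to torus actions. None of that machinery exists on the single compact fiber $Z'$; there is no regular atlas, no chart-by-chart extension, and no way to ``average'' diffeomorphisms with respect to a finite group to produce a diffeomorphism. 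Conjugacy of two finite-group actions in $\Aut(Z',P_1)$ is a cohomological question (a statement about $H^1(\Delta,\Aut(Z',P_1))$ or about rigidity of free $\Delta$-actions with quotient $Z$), and nothing you have written addresses it. As stated, the argument does not go through.

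By contrast, the paper organizes the proof the other way around: it first produces a $\Delta$-equivariant diffeomorphism $\phi':Z'_1\to Z'_2$ that carries one product decomposition to the other, and only then invokes the factor-wise rigidity (affine on $T^a$, isometric on each $F_m$) to upgrade $\phi'$ to a map respecting the full structures. The equivariance is thus built in from the outset---essentially because the de Rham splitting, being canonical, is automatically $\Delta$-invariant---rather than being repaired afterward. If you want to salvage your route, you would need an honest argument that any two free $\Delta$-actions on $Z'$ by automorphisms of $P_1$ with quotient diffeomorphic to $Z$ are conjugate in $\Aut(Z',P_1)$; the references to Lemmas~\ref{lem1}--\ref{lem3} do not supply this.
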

\begin{proof}
Let $Z_1$ and $Z_2$ be manifolds diffeomorphic to $Z$, with structures ${\mathcal C}_1$ and ${\mathcal C}_2$,
respectively. We can find a $\Delta$-equivariant diffeomorphism $\phi^\prime : Z_1^\prime \rightarrow Z_2^\prime$
that preserves the product structure. From the rigidity of the affine structure on $T^a$ and the rigidity of the volume-one
symmetric space structure on $F_m$, we can assume that $\phi^\prime$ is the product of an affine map and an
isometry.
\end{proof}

Given a structure $c \in {\mathcal C}$, let $G$ be the subgroup of $\Diff(Z)$ preserving $c$.
That is, an element $\phi \in G$ has a $\Delta$-equivariant lift $\phi^\prime \in \Diff(Z^\prime)$
that acts affinely on $T^a$ and isometrically on $\prod_{m\: = \:1}^N  F_m$.
From Lemma \ref{newlem}, ${\mathcal C} \: = \: \Diff(Z)/G$. We give
${\mathcal C}$ the quotient topology.

\begin{lemma} \label{lem4} 
The fiber bundle $s : \widehat{M} \rightarrow \widehat{B}$ can be reduced to a $T^k \rtimes \Gamma$-equivariant
bundle with structure group $G$. 
\end{lemma}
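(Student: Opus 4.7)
The plan is to construct a canonical $T^k \rtimes \Gamma$-equivariant section of the associated bundle $\widehat{M} \times_{\Diff(Z)} \mathcal{C} \to \widehat{B}$, which is tantamount to a reduction of structure group to $G$. The key tool is the Ricci flow applied fiberwise. Starting from the restriction $g^V_b := g^V\big|_{Z_b}$ on each fiber $Z_b = s^{-1}(b)$, which by hypothesis has nonnegative curvature operator, I would run the Ricci flow on each fiber for a short positive time. Since Hamilton's theorem shows that nonnegative curvature operator is preserved under Ricci flow, and since the flow depends smoothly on initial data and commutes with isometries, the resulting family $g^V_b(t)$ varies smoothly in $b$ and remains $T^k \rtimes \Gamma$-invariant.

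For any $t > 0$, Hamilton's strong maximum principle forces the kernel of the curvature operator of $g^V_b(t)$ to be a parallel subbundle of $\Lambda^2 T^* Z_b$. This produces a canonical parallel splitting of $TZ_b$ and, via the de Rham theorem, a canonical $\Delta$-invariant isometric product decomposition of the finite cover $Z'_b = T^a \times \prod_{m=1}^N F_m$, where $T^a$ carries a flat metric and each $F_m$ is irreducible. Invoking the classification of Section \ref{section2}, together with the long-time behavior of the normalized Ricci flow on compact manifolds with nonnegative curvature operator (B\"ohm--Wilking), one refines the non-flat factors so that each $F_m$ is a compact irreducible symmetric space. Rescaling each $F_m$ so that its volume equals $1$, and equipping $T^a$ with its canonical complete flat affine structure (trivial holonomy, coming from the identification of its universal cover with $\R^a$), determines an element $c(b) \in \mathcal{C}$.

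Since the Ricci flow is smooth in its initial data and equivariant under isometries of the starting metric, the assignment $b \mapsto c(b)$ is smooth and $T^k \rtimes \Gamma$-equivariant, giving the desired section of $\widehat{M} \times_{\Diff(Z)} \mathcal{C} \to \widehat{B}$ and hence the required reduction of structure group to $G$. The main technical obstacle is to verify that the factors emerging from the Ricci flow really are symmetric, rather than, for instance, non-symmetric K\"ahler factors biholomorphic to $\C P^n$ that are also permitted by the Section \ref{section2} classification. This is precisely where one must combine the short-time parallelism from the strong maximum principle with convergence results for the normalized Ricci flow on irreducible nonnegative curvature operator manifolds: the limiting metric on each non-flat factor is Einstein, and irreducibility plus nonnegative curvature operator then forces the symmetric model.
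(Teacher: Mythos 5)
Your core strategy --- running the Ricci flow fiberwise to canonically produce an element of $\mathcal{C}$ on each fiber, thereby defining a $T^k \rtimes \Gamma$-equivariant section of an associated $\mathcal{C}$-bundle over $\widehat{B}$ --- is exactly the paper's approach. A few of your intermediate steps diverge in ways worth noting. The detour through short-time flow and Hamilton's strong maximum principle to obtain the product splitting is unnecessary: the de Rham decomposition of the universal cover of each fiber is already canonical and the classification recalled in Section \ref{section2} (Noronha, Chow--Yang) gives the $\Delta$-invariant product $Z' = T^a \times \prod F_m$ directly from the nonnegativity of the curvature operator, with no flow required. Where the flow is genuinely needed is on the non-flat factors: for sphere factors the B\"ohm--Wilking convergence suffices, but for factors biholomorphic to $\C P^n$ one must invoke the convergence of the normalized K\"ahler--Ricci flow to the Fubini--Study metric, which the paper takes from Tian--Zhu. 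Your phrase ``the limiting metric on each non-flat factor is Einstein, and irreducibility plus nonnegative curvature operator then forces the symmetric model'' glosses over both the existence of the long-time limit in the K\"ahler case and its identification; quoting Tian--Zhu closes both gaps at once. Finally, you claim the assignment $b \mapsto c(b)$ is \emph{smooth} in $b$; the paper only asserts that the limiting metric depends \emph{continuously} on the initial data, which is all that is needed, and is all that follows readily from the cited proofs --- continuity of a section of a fiber bundle with fiber $\mathcal{C}=\Diff(Z)/G$ already gives the reduction of structure group, with the equivariant version handled by the Lashof reference in the paper.
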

\begin{proof}
Let $g_0$ be a Riemannian metric on $Z$ with nonnegative curvature operator. 
From Section \ref{section2}, a finite normal cover of $Z$ can be written as
an isometric product $Z^\prime \: = \: T^a \times \prod_{m\: = \:1}^N  F_m$.
Each factor $F_m$ is diffeomorphic to an irreducible symmetric space, but
if $F_m$ is a sphere or a complex projective space case then the metric need
not be the symmetric space metric.
We can run the Ricci flow on each $F_m$-factor, normalized so that
the volume approaches one.
From \cite{Boehm-Wilking (2008)} and \cite{Tian-Zhu (2007),Tian-Zhu (2011)}, the metric on $F_m$ converges to a symmetric
space metric with volume one.  From the proofs in these references, the limiting metric is a continuous function of
the initial metric. Running this flow fiberwise on the bundle $s : \widehat{M} \rightarrow \widehat{B}$,
in the limit each fiber obtains a structure from ${\mathcal C}$.

The bundle  $s : \widehat{M} \rightarrow \widehat{B}$ is classified by a continuous map
$f : \widehat{B} \rightarrow B\Diff(Z)$, defined up to homotopy.  With respect to the fibration
$E\Diff(Z) \rightarrow B\Diff(Z)$, put $\widehat{W} \: = \: f^* (E\Diff(Z))$, so that there is a commutative
diagram
\begin{align} \label{5.7}
& \widehat{W} \longrightarrow  E\Diff(Z) \\
& \downarrow \: \: \: \:  \: \: \: \:  \: \: \: \:  \: \: \: \:  \downarrow \notag \\
& \widehat{B} \stackrel{f}{\longrightarrow}  B\Diff(Z). \notag
\end{align}
The fibration $\widehat{W} \rightarrow \widehat{B}$ has fiber $\Diff(Z)$.
From the Ricci flow, we obtained a  continuous
$T^k \rtimes \Gamma$-invariant section of the fibration
$(\widehat{W} \times_{\Diff(Z)} {\mathcal C}) \rightarrow \widehat{B}$. Such a section
is equivalent to a  $T^k \rtimes \Gamma$-equivariant reduction of the structure group to $G$
\cite[Lemma 1.7]{Lashof (1982)}.
\end{proof}

Thus we can assume that each fiber $Z$ of $s : \widehat{M} \rightarrow \widehat{B}$ has a finite normal cover of the form
$Z^\prime \: = \: T^a \times \prod_{m\: = \:1}^N F_m$, with $F_m$ being a compact irreducible symmetric space of volume one,
and that the bundle $s : \widehat{M} \rightarrow \widehat{B}$ has a horizontal distribution whose holonomy lies in $G$.
We can find a vertical Riemannian metric $g^V$ that is compatible with this structure.
Using the same metric as before on $\widehat{B}$, along with $g^V$, we get a $T^k \rtimes \Gamma$-invariant metric $\widehat{g}$ on 
$\widehat{M}$. It follows that $T^I_{\: \:\alpha K}$ can only 
have nonzero entries when $I$ and $K$ correspond to the $T^a$-directions in the fiber. Using affine coordinates in the
affine directions, it follows that $\omega^I_{\: \: LJ}$ vanishes in such directions and $e_J T^I_{\: \:\alpha K} \: = \: 0$. Thus
$\widetilde{S}^I_{\: \: \alpha J K} \: = \: 0$. Hence
the curvature operator of $g_\epsilon$ remains bounded below as $\epsilon$ goes to zero.
The remaining claims of the theorem are straightfoward to verify; cf. \cite[p. 325-326]{Cheeger-Gromov (1986)}.

Suppose now that the $T^k$-action is not free. 
As the fF-structure is polarized, there is an integrable distribution $\widehat{\mathcal I}$
on $\widehat{M}$ so that if $U \subset M$ is an open set then the lift of $\eta({\mathcal K}(U))$ consists of the 
$\Gamma$-invariant sections of $\widehat{\mathcal I} \Big|_{\pi^{-1}(U)}$.
Decompose $\widehat{g}$ as in (\ref{5.1}),
replacing the generators of the $T^k$-action by $\widehat{\mathcal I}$. The curvature
computations are essentially local on $\widehat{M}$.  Lemma \ref{lem4} is unchanged and the rest of the
argument goes through.

\begin{remark} \label{remark1}
Instead of considering the double Riemannian submersion (\ref{5.3}), we could consider the single Riemannian submersion
$s_1 \circ s : \widehat{M} \rightarrow \widehat{B}/T^k$ with fiber $T^k \times Z$. Computing the $A$ and $T$ tensors for the Riemannian submersion
$s_1 \circ s$ would lead to the same conclusion.
\end{remark}

\section{Bounded volume collapse} \label{section6}

In this section we prove Theorem \ref{thm2}, partly along the lines of \cite[Section 3]{Cheeger-Gromov (1986)}.

Let $\{U_i\}$ be a regular atlas and let $\{F_i\}$ be a collection of invariant functions, as in Lemma \ref{lem3}. 
We can assume that the restriction of the infinitesimal fF-structure to each $U_i$ has a pure polarized subinfinitesimal
fF-structure.
Suppose first that we do not have to pass to substructures.

Let $g^V_{\inv}$ be the invariant vertical metric with positive curvature operator. Let $g_{\inv}$ be
the invariant metric on $M$ that extends $g^V_{\inv}$. Given $\epsilon > 0$, put $g_0 \: = \: (\log^2 \epsilon^{-1}) g_{\inv}$.

For $i \ge 1$, given $g_{i-1}$, we define $g_i$ inductively.  As in (\ref{5.1}), on $\widehat{U}_i$ write 
$\widehat{g}_{i-1} \: = \: \pi_i^* g_{i-1}$ as
\begin{equation} \label{6.1}
\widehat{g}_{i-1} \: = \: \widehat{g}_{i-1,1} + \widehat{g}_{i-1,2} + \widehat{g}_{i-1,3}.
\end{equation}
Put
\begin{equation} \label{6.2}
\widehat{g}_{i} \: = \: \epsilon^{2 \pi_i^* F_i} \widehat{g}_{i-1,1} +  \epsilon^{2 \pi_i^* F_i} \widehat{g}_{i-1,2} + \widehat{g}_{i-1,3}.
\end{equation}
Define $g_i$ on $U_i$ to be such that $\widehat{g}_i \: = \: \pi_i^* g_i$. Define $g_i$ on the complement of $U_i$ to be $g_{i-1}$.

Let $g_\epsilon$ be the result of performing these operators on all of the $U_i$'s.
To check that the curvature operator of $g_\epsilon$ is uniformly bounded below as $\epsilon$ goes to zero,
pick $m \in M$. Let $(T_m, F(m))$ be a maximal element with respect to the partial ordering of (\ref{3.1}). Let 
$\pi_m : \widehat{U}_m \rightarrow U_m$, $T_m$, $\Gamma_m$ and $s_m : \widehat{U}_m \rightarrow
\widehat{B}_m$ be as in Definition \ref{def3}.

We only have to check how the curvature operator at $m$ changes when going from
$g_{i-1}$ to $g_i$, for each $i$ such that $m \in U_i$, Because $F_i$ is  invariant for the entire fF-structure, 
$\pi_m^* F_i$ pulls back from a function $F^\prime_i$ on $\widehat{B}_m$.  Thus we can apply 
(\ref{4.6}), taking $f \: = \:  F^\prime_i \log(\epsilon)$.  Because of the original multiplication of $g_{\inv}$ by
$\log^2 \epsilon^{-1}$, when passing from $g_{\inv}$ to $g_0$, the terms in (\ref{4.6}) that involve
$|\nabla f|$ and $\Hess(f)$ will be uniformly bounded in $\epsilon$. We can follow the argument in 
Section \ref{section5}, up to the paragraph containing (\ref{5.6}). However, we can no longer assume that
$ \widetilde{S}^I_{\: \: \alpha JK}$ vanishes.

Suppose that the vertical curvature operator of  $\widehat{g}_{i-1}$  is uniformly bounded below by some constant
$c_{i-1}^2$. Let $D$ be a $2$-form on $\widehat{M}$. Then
\begin{align} \label{6.3}
\langle D, \widetilde{S} D \rangle \: = \: & 
D^{IJ} \widetilde{S}_{IJKL} D^{KL} + 4 D^{I\alpha} \widetilde{S}_{I \alpha JK} D^{JK} \\
\: \ge \: & \epsilon^{-2F_i(m)} c_{i-1}^2 D_{JK} D^{JK} + 4 \epsilon^{-F_i(m)} D^{I \alpha}  \left( \nabla_J T_{I \alpha K} - \nabla_K  T_{I \alpha J} \right) D^{JK} \notag \\
\: = \: & \parallel \epsilon^{-F_i(m)} c_{i-1}  D_{JK} + 2 c_{i-1}^{-1}  D^{I\alpha}   \left( \nabla_J T_{I \alpha K} - \nabla_K  T_{I \alpha J} \right) \parallel^2 \notag \\
& - 4 c_{i-1}^{-2}   \parallel  D^{I\alpha}   \left( \nabla_J T_{I \alpha K} - \nabla_K  T_{I \alpha J} \right) \parallel^2 \notag \\
\ge & - 4 c_{i-1}^{-2}   \parallel  D^{I\alpha}   \left( \nabla_J T_{I \alpha K} - \nabla_K  T_{I \alpha J} \right) \parallel^2, \notag
\end{align}
independent of $\epsilon$.

In the first step, going from $g_{\inv}$ to $g_0$, if $\epsilon < e^{-1}$ then the curvature operator cannot become more negative.  Thus it suffices to
look at the change in the curvature operator starting from $g_0$.
Because the atlas is regular, when going from $g_0$ to $g_{i-1}$, the metric on a fiber
$s_m^{-1}(\pt)$ gets multiplied by $\epsilon^{2 F_1(m) + \ldots + 2 F_{i-1}(m)}$. Then
$c_{i-1}^{-2}$ will be multiplied by the same factor. 

It follows from (\ref{6.3}) that the curvature
operator of $g_\epsilon$ stays bounded below as $\epsilon$ goes to zero.
The remaining claim of the theorem is straightforward to verify.

If instead we just have an subinfinitesimal fF-structure $\left( {\mathcal K}^\prime_i, {\mathcal D}^\prime_i \right)$ on each $U_i$, which is pure polarized, then we again
write $\widehat{g}_{i-1}$ as in (\ref{6.1}), where now 
\begin{enumerate}
\item $\widehat{g}_{i-1,1}$ is the restriction of $\widehat{g}_{i-1}$ to the
(lifted) $D^\prime_{U_i}$ directions, 
\item $\widehat{g}_{i-1,2}$  is the restriction of $\widehat{g}_{i-1}$ to the (lifted) $\eta({\mathcal K}^\prime_i)$-directions, and
 \item $\widehat{g}_{i-1,3}$ is the restriction of $\widehat{g}_{i-1}$ to the remaining orthogonal complement.
\end{enumerate}
We then define $\widehat{g}_i$ as in (\ref{6.2}). Since we assume that the curvature operator is positive as a symmetric form on 
$\Lambda^2\left( (D^\prime_{U_i})^* \right)$, the preceding argument goes through.

\section{General collapse} \label{section7}

In this section we prove Theorem \ref{thm3}, using results from \cite[Section 4]{Cheeger-Gromov (1986)}.

To establish notation, we first recall the construction of \cite[Section 4]{Cheeger-Gromov (1986)}, which applies to an
F-structure of positive rank.

Let $\epsilon > 0$ be a parameter.
Let $\{V_i\}$ be a regular atlas for the F-structure. Let $g_{\inv}$ be an invariant Riemannian
metric. Put $g_0\: = \: (\log^2 \epsilon^{-1}) g_{\inv}$.  Let $\{F_i\}$ be a 
collection of invariant functions as in Lemma \ref{lem3}. Put $\widehat{F}_i \: = \: \pi_i^* F_i$.

For $i \ge 1$, we define $g_i$ inductively from $g_{i-1}$. 
Put $\widehat{g}_{i-1,0} \: = \: \pi_i^* g_{i-1}$ on $\widehat{V}_i$.

Let $\widehat{\Sigma}_{i,j}$ be the union of the dimension-$j$ orbits of the $T_i$-action on $\widehat{V}_i$.
As in \cite[Section 4(c)]{Cheeger-Gromov (1986)}, there is a $T_i \rtimes \Gamma_i$-invariant covering
$\{\widehat{V}_{i,j}\}$ of $\widehat{V}_i$, where $\widehat{V}_{i,j}$ is a normal
neighborhood of a truncation $\widehat{\Sigma}^\prime_{i,j}$ of $\widehat{\Sigma}_{i,j}$. Let
$s_{i,j} : \widehat{V}_{i,j} \rightarrow \widehat{\Sigma}^\prime_{i,j}$ be the projection map. A certain Riemannian metric
on $\widehat{V}_{i,j}$ is constructed in \cite[Section 4(c)]{Cheeger-Gromov (1986)}, for which
$s_{i,j}$ is a $T_i \rtimes \Gamma_i$-equivariant Riemannian submersion.

Given $\widehat{p} \in \widehat{\Sigma}^\prime_{i,j}$, let $I_{i,j}(\widehat{p})$ denote the
isotropy group at $\widehat{p}$ for the $T_i$-action, with Lie algebra
${\mathfrak i}_{i,j}(\widehat{p})$. Let ${\mathfrak k}_{i,j}(\widehat{p})$ be the orthogonal complement to
${\mathfrak i}_{i,j}(\widehat{p})$, with respect to an inner product on the Lie algebra of $T_i$
defined in \cite[Section 4(c)]{Cheeger-Gromov (1986)}. Let $K_{i,j}(\widehat{p})$ be the
connected (not necessarily closed) subgroup of  $T_i$ with Lie algebra ${\mathfrak k}_{i,j}(\widehat{p})$.
Note that $K_{i,j}(\widehat{p}) \cdot \widehat{p} \: = \: T_i \cdot \widehat{p}$.
There is an isometric action of  $K_{i,j}(\widehat{p})$ on $s_{i,j}^{-1}(T_i \cdot \widehat{p})$, that extends
the action of $K_{i,j}(\widehat{p})$ on $T_i \cdot \widehat{p}$. As $j$ varies the various structures are
compatible, as described in \cite[Section 4(c)]{Cheeger-Gromov (1986)}.

Define $V_{i,j} \subset V_i$ by $V_{i,j} = \pi_i(\widehat{V}_{i.j})$. 
As in Lemma \ref{lem3}, 
we construct invariant functions
${F}_{i,j} : {V}_i \rightarrow [0,1]$ so that 
$\supp( {F}_{i,j}) \subset {V}_{i,j}$ and
${V}_i \: = \: \bigcup_j {F}_{i,j}^{-1}(1)$. Put $\widehat{F}_{i,j} \: = \: \pi_i^* F_{i,j}$.
We will define $\widehat{g}_{i-1, j}$ inductively in $j$.
Write
\begin{equation} \label{7.1}
\widehat{g}_{i-1, j-1}\: = \: \widehat{g}_{i-1, j-1}^{(1)} + \widehat{g}_{i-1, j-1}^{(2)} +   \widehat{g}_{i-1, j-1}^{(3)},
\end{equation}
where 
\begin{enumerate}
\item $\widehat{g}_{i-1, j-1}^{(1)}$ is the restriction of $\widehat{g}_{i-1, j-1}$ to the tangent space of 
each $K_{i,j}(\widehat{p})$-orbit  in $s_{i,j}^{-1}(T_i \cdot \widehat{p})$, as $\widehat{p}$ varies over
$\widehat{\Sigma}_{i,j}$, 
\item  $\widehat{g}_{i-1, j-1}^{(2)}$ is the restriction of $\widehat{g}_{i-1, j-1}$ to the orthogonal complement of
the tangent space of 
each $K_{i,j}(\widehat{p})$-orbit, in $s_{i,j}^{-1}(T_i \cdot \widehat{p})$,  as $\widehat{p}$ varies over
$\widehat{\Sigma}_{i,j}$, and
\item  $\widehat{g}_{i-1, j-1}^{(3)}$ is the restriction of $\widehat{g}_{i-1, j-1}$ to 
the orthogonal complement of the tangent  bundle  of $s_{i,j}^{-1}(T_i \cdot \widehat{p})$,  as $\widehat{p}$ varies over
$\widehat{\Sigma}_{i,j}$.
\end{enumerate}
On $\widehat{V}_{i,j}$, put
\begin{equation} \label{7.2}
\widehat{g}_{i-1, j}\: = \:  \epsilon^{2 \widehat{F}_i \widehat{F}_{i,j-1}} \widehat{g}_{i-1, j-1}^{(1)} + \widehat{g}_{i-1, j-1}^{(2)} + 
\epsilon^{-4 \widehat{F}_i \widehat{F}_{i,j}}
\widehat{g}_{i-1, j-1}^{(3)}.
\end{equation}
On the complement of $\widehat{V}_{i,j}$ in $\widehat{V}_i$, let $\widehat{g}_{i-1,j}$ be $\widehat{g}_{i-1,j-1}$ . 

Let $\widehat{g}_i$ be the result of doing this for all $j$. On $V_i$, let $g_i$ be such that
$\widehat{g}_i \: = \: \pi_i^* g_i$. On the complement of $V_i$, let $g_i$ be $g_{i-1}$. Let $g_\epsilon$ be the
result of doing this for all $i$.

We claim that as $\epsilon$ goes to zero, the curvature operator of $g_\epsilon$ is uniformly bounded below.
To see this, we can compute the curvature tensor of $\widehat{g}_{i-1,j}$ in (\ref{7.2}) using equations
(\ref{4.4}) for the locally-defined Riemannian submersion $\alpha_{i,j} : \widehat{V}_{i,j} \rightarrow \widehat{V}_{i,j}/K_{i,j}$.
The functions $\widehat{F}_i$ and $\widehat{F}_{i,j}$ pull back from functions  $\widehat{F}_i^\prime$ and
$\widehat{F}_{i,j}^\prime$ on $ \widehat{V}_{i,j}/K_{i,j}$. In applying (\ref{4.4}), we take
$f \: = \: (\log \epsilon) \widehat{F}_i^\prime \widehat{F}_{i,j}^\prime$ and
$h \: = \:  2 (\log \epsilon) \widehat{F}_i^\prime \widehat{F}_{i,j}^\prime$. Because of the expansion of
$\widehat{g}_{i-1, j-1}^{(3)}$, the terms on the right-hand side of (\ref{4.4}) with an index in such a direction will remain
bounded below. 
Thus, it is only relevant to check the curvature operator on the submanifolds
$s_{i,j}^{-1}(T_i \cdot \widehat{p})$. That the curvature operator stays bounded below on these
submanifolds follows from the proof in Section \ref{section6}, in the special case when the fibering $s_m$ is the identity map. 
As in that section, we use the fact that the functions
$F_i$ and $F_{i,j}$ are invariant for the entire F-structure. 
Note that $g_0$ had a factor of $\log^2 \epsilon^{-1}$.

It is straightforward to see that as $\epsilon$ goes to zero, $(M, g_\epsilon)$ is locally volume collapsing in the
sense of the theorem.  Note that the $T_i$-orbits in $\widehat{V}_{i,j}$ are contained in the submanifolds
$s_{i,j}^{-1}(T_i \cdot \widehat{p})$. These submanifolds are not expanded in the construction of
$g_\epsilon$, while the $K_{i,j}$-directions in them are contracted.

Now suppose that $M$ has an fF-structure of positive rank.  Let $\{U_i\}$ be a regular atlas.
Let $g_{\inv}$ be an invariant Riemannian metric.  Put $g_0 \: = \: (\log^2 \epsilon^{-1}) g_{\inv}$. 
Let $\{F_i\}$ be a collection of invariant functions as in Lemma \ref{lem3}. Put $\widehat{F}_i \: = \: 
\pi_i^* F_i$.
 
Let $s_i : \widehat{U}_i \rightarrow \widehat{V}_i$ be the fibering. (We previously denoted the base by
$\widehat{B}_i$.) We construct the covering $\{ \widehat{V}_{i,j} \}$
of $\widehat{V}_i$ as before. There is a $T_i \rtimes \Gamma_i$-action on $\widehat{V}_i$, where
$T_i$ can now be the trivial group if $s_i$ has fibers of positive dimension.
Again, there is an integrable distribution
$K_{i,j}$ on $\widehat{U}_{i,j}$.
 Put $\widehat{U}_{i,j} \: = \: s_i^{-1} \left( \widehat{V}_{i,j} \right)$.

As in the preceding discussion, we define $\widehat{g}_{i-1,j-1}$ inductively in $j$.
Write 
\begin{equation} \label{7.3}
\widehat{g}_{i-1, j-1}\: = \: \widehat{g}_{i-1, j-1}^{(0)} + \widehat{g}_{i-1, j-1}^{(1)} + \widehat{g}_{i-1, j-1}^{(2)} +   \widehat{g}_{i-1, j-1}^{(3)},
\end{equation}
where $\widehat{g}_{i-1, j-1}^{(0)}$ is the restriction of $\widehat{g}_{i-1, j-1}$ to $\Ker(ds_i)$ and  $\widehat{g}_{i-1, j-1}^{(1)}$, 
$\widehat{g}_{i-1, j-1}^{(2)}$, $\widehat{g}_{i-1, j-1}^{(3)}$ are defined as before on $\Ker(ds_i)^\perp$.
Then on $\widehat{U}_{i,j}$, we put
\begin{equation} \label{7.4}
\widehat{g}_{i-1, j}\: = \:   \epsilon^{2 \widehat{F}_i \widehat{F}_{i,j-1}} \widehat{g}_{i-1, j-1}^{(0)} + 
\epsilon^{2 \widehat{F}_i \widehat{F}_{i,j-1}} \widehat{g}_{i-1, j-1}^{(1)} + \widehat{g}_{i-1, j-1}^{(2)} + 
\epsilon^{-4 \widehat{F}_i \widehat{F}_{i,j}}
\widehat{g}_{i-1, j-1}^{(3)}.
\end{equation}
On the complement of $\widehat{U}_{i,j}$ in $\widehat{U}_i$, let $\widehat{g}_{i-1,j}$ be $\widehat{g}_{i-1,j-1}$ . 
Let $\widehat{g}_i$ be the result of doing this for all $j$. On $U_i$, let $g_i$ be such that
$\widehat{g}_i \: = \: \pi_i^* g_i$. On the complement of $U_i$, let $g_i$ be $g_{i-1}$. Let $g_\epsilon$ be the
result of doing this for all $i$.

We claim that as $\epsilon$ goes to zero, the curvature operator of $g_\epsilon$ is uniformly bounded below.
Because of the expansion in 
$\widehat{g}_{i-1, j-1}^{(3)}$, it is only relevant to check the curvature operator on the total spaces of the
fibrations $s_i$  over the submanifolds
$s_{i,j}^{-1}(T_i \cdot \widehat{p})$. That the curvature operator stays bounded below on these
total spaces follows as in Section \ref{section6}.

It is straightforward to see that as $\epsilon$ goes to zero, $(M, g_\epsilon)$ is locally volume collapsing in the
sense of the theorem.

\section{Local structure of collapsed manifolds with curvature operator bounded below} \label{section8}

In this section we give a local model, at the volume scale, for the geometry of a Riemannian manifold which is
locally volume collapsed relative to a lower bound on the curvature operator.  We describe a canonical
fF-structure on the local model. 

As an immediate application, we show that in a certain type of collapse, the curvature operator cannot stay bounded
below.

\subsection{Local model} \label{subsection8.1}

Let $M$ be a complete Riemannian manifold. We first adapt some definitions from
\cite{Kleiner-Lott (2010),Perelman (2003)}.

\begin{definition} \label{def7}
Given $p  \in M$, the curvature
scale $R_p$ at $p$ is defined as follows. If the connected component of $M$ containing $p$ has
nonnegative curvature operator then $R_p \: = \: \infty$. Otherwise, $R_p$ is the (unique) number $r > 0$
such that the smallest eigenvalue of the curvature operator on $B(p, r)$ is $- \frac{1}{r^2}$.
\end{definition}

\begin{definition} \label{def8}
Let $c_n$ denote the volume of the Euclidean unit ball in $\R^n$.
Fix $\overline{w} \in  (0, c_n)$.
Given $p \in M$, the $\overline{w}$-volume scale at $p$ is
\begin{equation} \label{8.1}
r_p \: = \: r_p(\overline{w}) \: = \: \inf \{ r>0 : \vol(B(p, r)) \: = \: \overline{w} r^n \}.
\end{equation}
If there is no such $r$ then we say that the $\overline{w}$-volume scale at $p$ is infinite.
\end{definition}

\begin{lemma} \label{lem5}
Let $(M, p)$ be a  complete pointed Riemannian manifold of dimension $n$.
\begin{enumerate}
\item
Given $\delta, \overline{w} > 0$ there is some $\delta^\prime \: = \: \delta^\prime(n, \delta, \overline{w}) > 0$ with the following property.
 Suppose that
$\vol(B(p, R_p)) \le \delta^\prime  R_p^n$. Then $r_p \le \delta R_p$.
\item Given $r, \delta, \overline{w} > 0$ there is some $\delta^\prime \: = \: \delta^\prime(n, r, \delta, \overline{w}) > 0$ with the following property.
Suppose that the curvature operator of $M$ is bounded below by $- \Id$. Suppose that
$\vol(B(p, r)) \le \delta^\prime$. Then $r_p \le \delta R_p$.
\end{enumerate}
\end{lemma}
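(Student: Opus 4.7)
My plan for part (1) is essentially a continuity argument. Assuming $\vol(B(p,R_p)) \le \delta' R_p^n$, monotonicity of the volume of metric balls gives $\vol(B(p,\delta R_p))/(\delta R_p)^n \le \delta'/\delta^n$, so if I pick $\delta' < \overline{w}\delta^n$ then the normalized volume $f(s) := \vol(B(p,s))/s^n$ satisfies $\lim_{s\to 0^+} f(s) = c_n > \overline{w}$ and $f(\delta R_p) < \overline{w}$. Since $f$ is continuous in $s$ (distance spheres have measure zero in a Riemannian manifold), the intermediate value theorem produces $s^* \in (0, \delta R_p]$ with $f(s^*) = \overline{w}$, and hence $r_p \le s^* \le \delta R_p$. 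So $\delta'(n,\delta,\overline{w})$ may be taken to be any positive number less than $\overline{w}\delta^n$.

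For part (2), my plan is to bound $\vol(B(p, R_p))/R_p^n$ by a constant multiple of $\delta'$ (the multiplier depending on $n$ and $r$) and then feed this into part (1). First, the global hypothesis $\Riem \ge -\Id$ combined with the defining equality $\lambda_{\min}(\Riem) = -1/R_p^2$ on $B(p, R_p)$ forces $R_p \ge 1$. I then split into two cases. If $R_p \le r$, monotonicity gives $\vol(B(p, R_p)) \le \vol(B(p,r)) \le \delta'$, and since $R_p \ge 1$ also $\vol(B(p, R_p))/R_p^n \le \delta'$. If $R_p > r$, the defining property of $R_p$ gives $\Ric \ge -(n-1)/R_p^2$ on $B(p, R_p)$, so Bishop--Gromov applied between scales $r$ and $R_p$ yields
\[
\frac{\vol(B(p, R_p))}{V_{-1/R_p^2}(R_p)} \le \frac{\vol(B(p, r))}{V_{-1/R_p^2}(r)}.
\]
A scaling calculation (substitute $t = R_p u$) shows $V_{-1/R_p^2}(R_p) = C_n R_p^n$ for a purely dimensional constant $C_n$, while $V_{-1/R_p^2}(r) \ge c_n r^n$ since hyperbolic space forms have at least Euclidean volume. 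Together these give $\vol(B(p, R_p))/R_p^n \le (C_n / c_n r^n)\, \delta'$.

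In either case, choosing $\delta'(n, r, \delta, \overline{w})$ sufficiently small forces $\vol(B(p, R_p)) \le \delta'_1(n,\delta,\overline{w}) R_p^n$, where $\delta'_1$ is the constant from part (1). Applying part (1) then yields $r_p \le \delta R_p$, completing the argument.

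The main subtlety is the regime where $R_p$ is much larger than $r$: naively one might fear that the Bishop--Gromov ratio $V_{-1/R_p^2}(R_p)/V_{-1/R_p^2}(r)$ blows up as $R_p \to \infty$, but the exact scaling $V_{-1/R_p^2}(R_p) = C_n R_p^n$ (the curvature parameter and the ball radius scale together) is precisely what keeps the resulting bound on $\vol(B(p,R_p))/R_p^n$ uniform in $R_p$. Everything else is formal manipulation of the definitions.
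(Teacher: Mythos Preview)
Your proof of part (1) is correct and essentially identical to the paper's: both use $\lim_{s\to 0^+}\vol(B(p,s))/s^n = c_n > \overline{w}$, monotonicity of volume, and the intermediate value theorem. (You should note, as the paper does, that one may assume $\delta < 1$; otherwise the monotonicity step $\vol(B(p,\delta R_p))\le \vol(B(p,R_p))$ need not hold.)

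For part (2) your argument is correct but takes a longer route than the paper. The paper does not invoke Bishop--Gromov or attempt to control $\vol(B(p,R_p))/R_p^n$ at all. Instead it argues directly: choosing $\delta' = \tfrac12 \overline{w}\min(r^n,\delta^n)$, the hypothesis $\vol(B(p,r))\le \delta' < \overline{w}r^n$ forces $r_p < r$ exactly as in part (1); then $\overline{w}r_p^n = \vol(B(p,r_p)) \le \vol(B(p,r)) \le \delta' \le \tfrac12 \overline{w}\delta^n$ gives $r_p < \delta$, and since $R_p \ge 1$ this yields $r_p \le \delta R_p$. So the only use of the curvature hypothesis is the single inequality $R_p\ge 1$. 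Your approach, while valid, brings in relative volume comparison and a case split to reduce to part (1); what it buys is a bound on $\vol(B(p,R_p))/R_p^n$ that the paper never needs, at the cost of invoking a heavier tool.
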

\begin{proof}
\begin{enumerate}
\item
Without loss of generality, we can assume that $\delta < 1$. Put $\delta^\prime \: = \: \frac12 \overline{w} \delta^n$.
Since $\vol(B(p, R_p)) \le \delta^\prime  R_p^n < \frac12 \overline{w} R_p^n$,
the definition of $r_p$ implies that $r_p < R_p$. Then
\begin{equation} \label{8.2}
\overline{w} r_p^n \: = \: 
\vol \left( B(p, r_p) \right) \le \vol \left( B(p, R_p ) \right) \le \delta^\prime R_p^n \: = \:  \frac12 \overline{w} \delta^n  R_p^n,
\end{equation}
from which the claim follows.
\item 
From the definition of $R_p$, we have $R_p \ge 1$.
Put $\delta^\prime \: = \: \frac12 \overline{w} \min(r^n, \delta^n)$. Since,
$\vol(B(p,r)) \le \delta^\prime \le \frac12 \overline{w} r^n$, the definition of $r_p$ implies $r_p < r$. Then
\begin{equation} \label{8.3}
\overline{w} r_p^n \: = \: 
\vol \left( B(p, r_p) \right) \le \vol \left( B(p, r ) \right) \le \delta^\prime \le  \frac12 \overline{w} \delta^n,
\end{equation}
from which the claim follows.
\end{enumerate}
\end{proof}

We define pointed $C^K$-nearness between two complete pointed Riemannian manifolds $(M_1, p_1)$ and
$(M_2, p_2)$ as usual  in terms of the existence of a basepoint-preserving
$C^{K+1}$-smooth map $F : M_1 \rightarrow M_2$ so that $F^* g_2$ is $C^K$-close to
$g_1$ on large balls around $p_1$.

The next proposition gives a local model for the geometry of a manifold which is locally volume collapsed with respect to
a lower bound on the curvature operator, under an additional smoothing assumption at the volume scale.

\begin{proposition} \label{prop1}
Given $n \in \Z^+$,  an integer $K \ge 10$, a function $A : (0, \infty) \times (0, \infty) \rightarrow (0, \infty)$ and $\epsilon > 0$, there are some 
$\overline{w}, \delta  > 0$ with the following property.
Let $(M, p)$ be a pointed complete Riemannian manifold of dimension $n$.  Let $r_p$ denote the $\overline{w}$-volume scale at $p$.
Suppose that 
\begin{enumerate}
\item $r_p \le \delta R_p$ and
\item For all $k \in [0,K]$ and $C < \delta^{-1}$, we have $| \nabla^k \Riem | \le A(C, \overline{w}) r_p^{-(k+2)}$
on $B(p, Cr_p)$.
\end{enumerate}
Then
$\left( \frac{1}{r_p} M, p \right)$ is $\epsilon$-close in the pointed $C^K$-topology to a pointed complete Riemannian manifold
$({\mathcal M}, p^\prime)$ for which ${\mathcal M}$ is an isometric quotient
$(\widetilde{S} \times Y)/\Gamma$, where
\begin{enumerate}
\item $S$ is a compact Riemannian manifold having nonnegative curvature operator, with fundamental group $\Gamma$ and
 $\widetilde{S}$ as its universal cover, 
\item $(Y, y)$ is a pointed complete Riemannian manifold with nonnegative curvature operator that is diffeomorphic to $\R^k$, with $k \in [0,n]$,
\item $\Gamma$ acts by isometries on $Y$, fixing $y$, and
\item The Tits cone of $Y$ is $\epsilon$-close in the pointed Gromov-Hausdorff topology to a complete pointed
Alexandrov space of dimension less than $n$.
\end{enumerate}
\end{proposition}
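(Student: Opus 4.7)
The plan is to argue by contradiction, extracting a $C^{K+1}$-limit under rescaling and identifying it via the structure theorem for manifolds with nonnegative curvature operator recalled in Section \ref{section2}. Fix $n$, $K$, $A$, $\epsilon$ and choose $\overline{w} > 0$ small (how small will be dictated at the end). Suppose no such $\delta$ exists; then there is a sequence $(M_i, p_i)$ satisfying hypotheses (1)-(2) with $\delta_i \to 0$, such that $(r_{p_i}^{-1} M_i, p_i)$ is never $\epsilon$-close in the pointed $C^K$-topology to a model as in the conclusion.

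First I would pass to the rescaled metrics $g_i^\prime = r_{p_i}^{-2} g_i$. Under this rescaling, the curvature scale at $p_i$ becomes $R_{p_i}/r_{p_i} \ge \delta_i^{-1}$, so $\Riem_{g_i^\prime} \ge - \delta_i^2 \, \Id$; the $\overline{w}$-volume scale at $p_i$ is $1$; and hypothesis (2) becomes $|\nabla^k \Riem_{g_i^\prime}| \le A(C, \overline{w})$ on $B(p_i, C)$ for every $C < \delta_i^{-1}$. The two-sided curvature bound combined with $\vol(B(p_i, 1)) = \overline{w} > 0$ yields a uniform positive lower bound on $\inj(p_i)$ via Cheeger's lemma. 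Standard Cheeger-Gromov compactness then extracts a subsequence converging in the pointed $C^{K+1}$-topology to a complete pointed Riemannian manifold $(\mathcal{M}, p^\prime)$. Since $\delta_i \to 0$, the limit has nonnegative curvature operator, so the structure theorem from Section \ref{section2} gives $\mathcal{M} = (\widetilde{S} \times Y)/\Gamma$ with the required properties (1)-(3).

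It remains to verify (4). By Bishop-Gromov applied to $\mathcal{M}$,
\begin{equation*}
\vol_{\mathcal{M}}(B(p^\prime, R)) \: \le \: \overline{w} R^n \quad \text{for all } R \ge 1,
\end{equation*}
which passes under outward rescaling to a corresponding volume bound on the Tits cone $C_\infty(\mathcal{M})$. Because $\widetilde{S}$ is compact, and the $\Gamma$-action on $Y$ has a fixed point so factors through a compact subgroup of $\Isom(Y)$, the Tits cone of $Y$ finitely covers $C_\infty(\mathcal{M})$ and inherits a small volume ratio depending on $\overline{w}$. A precompactness argument in the class of pointed nonnegatively curved Alexandrov spaces of dimension $\le n$ shows that for $\overline{w}$ sufficiently small in terms of $n$ and $\epsilon$, any such Alexandrov space is $\epsilon$-close in the pointed Gromov-Hausdorff topology to an Alexandrov space of dimension strictly less than $n$. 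This contradicts the failure of $\epsilon$-closeness assumed at the start.

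The main obstacle is the last step: ruling out the degenerate case in which $Y$ is diffeomorphic to $\R^n$ (soul a point) and its Tits cone has full dimension $n$. The volume collapse encoded by small $\overline{w}$ is precisely what excludes this, since the Euclidean cone $\R^n$ has volume ratio $c_n$ while we have produced a cone with volume ratio at most $\overline{w} \ll c_n$, and Alexandrov precompactness then forces proximity to a cone of strictly lower dimension. Calibrating $\overline{w}$ against $\epsilon$ through this compactness step, and then choosing $\delta$ afterward by the contradiction argument above, produces the constants asserted in the statement.
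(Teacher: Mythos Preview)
Your proposal is correct and follows essentially the same route as the paper: contradiction, rescaling at the volume scale, $C^K$-compactness from the derivative bounds and the volume normalization, nonnegative curvature operator in the limit, then the Noronha splitting from Section~\ref{section2} for conclusions (1)--(3), and finally Bishop--Gromov plus smallness of $\overline{w}$ for conclusion (4).

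One small cleanup: your middle paragraph asserts that the Tits cone of $Y$ ``finitely covers'' $C_\infty(\mathcal{M})$, but $\Gamma = \pi_1(S)$ need not be finite (think $S$ a torus). The paper avoids this by a direct dichotomy: if $\dim Y < n$ then the Tits cone of $Y$ already has dimension $< n$ and there is nothing to do; if $\dim Y = n$ then $\dim S = 0$, so $S$ is a point, $\Gamma$ is trivial, and $\mathcal{M} = Y$. Your final paragraph in fact lands on exactly this critical case, so the argument goes through, but the case split is cleaner than the covering claim.
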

\begin{proof}
 Fix $\overline{w} > 0$ for the moment. Leaving off conclusion (4) for the moment,  suppose that the proposition is not true.
Then there is a positive sequence $\{\delta_i \}_{i\: = \:1}^\infty$ converging to zero
and a sequence of pointed Riemannian manifolds $\{(M_i, p_i)\}_{i\: = \:1}^\infty$
so that $(M_i, p_i)$
satisfies the hypotheses of the proposition, with $\delta \: = \: \delta_i$, but does not satisfy the conclusion of the proposition.
Using assumption (2) of the proposition, after passing to a subsequence we can assume that $\lim_{i \rightarrow \infty} 
\left( \frac{1}{r_{p_i}} M_i, p_i \right)
\: = \: ({\mathcal M}, p^\prime )$ exists in the pointed $C^K$-topology. From the definition of $R_{p_i}$, assumption (1)
of the proposition implies that ${\mathcal M}$ has nonnegative curvature operator.  Let $S$ be the soul of ${\mathcal M}$.
Then conclusions (1)-(3) of the proposition follow from the result of \cite{Noronha (1989)}, as mentioned in Section \ref{section2}.

Finally, if $\dim(Y) < n$ then the Tits cone of $Y$ is already  of dimension less than $n$.  Suppose that
$\dim(Y) \: = \: n$, so  ${\mathcal M} \: = \: Y$. Now $\vol(B(p^\prime, 1))  \: = \:  \overline{w}$. The Bishop-Gromov inequality implies that for all $R \ge 1$,
we have $\vol(B(p^\prime, R)) \le \overline{w} R^n$. Taking $\overline{w}$ sufficiently small, we can ensure
that the Tits cone of ${\mathcal M}$ has pointed Gromov-Hausdorff distance less than $\epsilon$ from some 
complete pointed Alexandrov space of dimension less than $n$.
\end{proof}

\begin{corollary} \label{cor2}
Given $n \in \Z^+$,  an integer $K \ge 10$, a function $A : (0, \infty) \times (0, \infty) \rightarrow (0, \infty)$ and $r, \epsilon > 0$, there are some 
$\overline{w}, \delta^\prime  > 0$ with the following property.
Let $M$ be a complete Riemannian manifold of dimension $n$ with curvature operator bounded below by $- \Id$.  
Let $r_p$ denote the $\overline{w}$-volume scale at $p$.
Suppose that for some $p \in M$,
\begin{enumerate}
\item $\vol(B(p,r)) \le \delta^\prime$ and
\item For all $k \in [0,K]$ and $C < \delta^{-1}$, we have $| \nabla^k \Riem | \le A(C, \overline{w}) r_p^{-(k+2)}$
on $B(p, Cr_p)$.
\end{enumerate}
Then the conclusion of Proposition \ref{prop1} holds at $p$.
\end{corollary}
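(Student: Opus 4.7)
The plan is to deduce the corollary directly from Proposition \ref{prop1} by using Lemma \ref{lem5}(2) to convert the small local volume hypothesis into the small-$r_p/R_p$ hypothesis required by Proposition \ref{prop1}. Given the inputs $n, K, A, r, \epsilon$, I first apply Proposition \ref{prop1} to these same $n, K, A, \epsilon$ to obtain parameters $\overline{w} > 0$ and $\delta > 0$ such that any $(M, p)$ satisfying conditions (1) and (2) of Proposition \ref{prop1} has $\left( \frac{1}{r_p} M, p \right)$ close in the pointed $C^K$-topology to a model space of the stated form. I will use exactly this $\overline{w}$ as the volume-scale parameter in the statement of the corollary.

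Next, since the hypothesis of the corollary imposes a global curvature operator lower bound $\Riem \ge - \Id$, the definition of $R_p$ (Definition \ref{def7}) immediately forces $R_p \ge 1$ at every point. This is precisely the setup of Lemma \ref{lem5}(2), so I feed $n, r, \delta, \overline{w}$ into that lemma to produce a $\delta' = \delta'(n, r, \delta, \overline{w}) > 0$ with the property that any $p \in M$ with $\vol(B(p, r)) \le \delta'$ automatically satisfies $r_p \le \delta R_p$. This $\delta'$ will be the $\delta'$ of the corollary.

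With these choices of $\overline{w}$ and $\delta'$, condition (1) of the corollary combined with Lemma \ref{lem5}(2) yields condition (1) of Proposition \ref{prop1}, while condition (2) of the corollary is literally condition (2) of Proposition \ref{prop1} (with the same $A$, $K$, and $\overline{w}$). Hence Proposition \ref{prop1} applies at $p$, producing the asserted $C^K$-approximation by an isometric quotient $(\widetilde{S} \times Y)/\Gamma$ with the four listed properties, which is the conclusion of the corollary.

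I do not anticipate any genuine obstacle here: the corollary is essentially a bookkeeping reformulation of Proposition \ref{prop1} in which the abstract ratio bound $r_p/R_p \le \delta$ is replaced by a concrete absolute volume bound, made possible by the fact that the global lower bound $\Riem \ge - \Id$ normalizes $R_p$ away from zero. The only minor care needed is the order in which the constants are produced: $\overline{w}$ and $\delta$ must be extracted from Proposition \ref{prop1} \emph{before} Lemma \ref{lem5}(2) is invoked, since the latter's output $\delta'$ depends on both.
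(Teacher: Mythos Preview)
Your proposal is correct and follows exactly the same route as the paper, which simply says the result follows from Lemma~\ref{lem5}(2) and Proposition~\ref{prop1}. Your expanded account of the order in which the constants are chosen (first $\overline{w},\delta$ from Proposition~\ref{prop1}, then $\delta'$ from Lemma~\ref{lem5}(2)) is the intended unpacking of that one-line proof.
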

\begin{proof}
This follows from Lemma \ref{lem5}.(2) and Proposition \ref{prop1}.
\end{proof}

\subsection{Canonical fF-structure} \label{subsection8.2}

We describe a canonical fF-structure on ${\mathcal M} = (\widetilde{S} \times Y)/\Gamma$; cf. \cite[Section 2]{Cheeger-Gromov (1990)}.
From Section \ref{section2}, there is a finite cover $S^\prime$ of $S$ which is isometric
to the product of a torus $T^l$ with a simply-connected compact manifold $N^\prime$, the latter of which being
the isometric product of irreducible factors each of which is 
\begin{enumerate}
\item A Riemannian manifold that is diffeomorphic to a sphere,
\item A K\"ahler manifold that is biholomorphic to a complex projective space or
\item A symmetric space with nonnegative sectional curvature.
\end{enumerate}

Let ${\mathcal M}^\prime$ be the pullback of ${\mathcal M}$ to $S^\prime$, so that the diagram
\begin{align} \label{8.4}
& {\mathcal M}^\prime \rightarrow {\mathcal M} \\
& \downarrow \: \: \: \: \: \: \: \: \: \: \: \:  \downarrow \notag \\
& S^\prime \: \: \rightarrow \: \:  S \notag
\end{align}
commutes. The $N^\prime$ factor splits off isometrically from the bundle ${\mathcal M}^\prime \rightarrow S^\prime$, so ${\mathcal M}^\prime$
is a product
${\mathcal M}^\prime \: = \: N^\prime \times Z^\prime$. The remaining bundle $Z^\prime \rightarrow T^l$ has a local isometric product structure, wtih
fiber $Y$. There is a corresponding holonomy representation
$\rho : \Z^l \rightarrow \Isom(Y, y)$. 

As the differential $D_y : \Isom(Y, y) \rightarrow O(T_yY)$ is injective,
we can equally well consider the homomorphism $D_y \circ \rho : \Z^l \rightarrow O(T_yY)$. Put
$C \: = \: \overline{\Image(D_y \circ \rho)}$. Then $C$ is a compact Abelian Lie group, and so fits into a split
exact sequence $1 \rightarrow T^i \rightarrow C \rightarrow F \rightarrow 1$ for some finite abelian 
group $F$.  In particular, there is a surjective homomorphism
\begin{equation} \label{8.5}
\eta : \Z^l \stackrel{D_y \circ \rho}{\longrightarrow} C \longrightarrow F.
\end{equation}
Put
$\widehat{T}^l \: = \: \R^l/\Ker(\eta)$. Let $Z^{\prime \prime}$ be the pullback of
$Z^\prime$ to $\widehat{T}^l$, so that the diagram
\begin{align} \label{8.6}
& Z^{\prime \prime} \rightarrow \: \:  Z^\prime \\
& \downarrow \: \: \: \: \: \: \: \: \: \: \: \:  \downarrow \notag \\
& \widehat{T}^l \: \: \rightarrow \: \:  T^l \notag
\end{align}
commutes. Put ${\mathcal M}^{\prime \prime} \: = \: N^\prime \times Z^{\prime \prime}$. It is a finite cover of ${\mathcal M}$.

We claim that there is an effective isometric action on $Z^{\prime \prime}$ by a torus whose dimension is at least $l$.
Since the universal cover $\widetilde{Z^{\prime \prime}}$ splits isometrically as $\R^l \times Y$, there is an evident
$\R^l \times T^i$-action on  $\widetilde{Z^{\prime \prime}}$. There is an inclusion $\Ker(\eta) \rightarrow 
\R^l \times T^i$, which is the product of the inclusion $\Ker(\eta) \rightarrow \R^l$ and the
restriction of $D_y \circ \rho$ to $\Ker(\eta)$. The action of $\R^l \times T^i$ on $Z^{\prime \prime} \: = \:
(\R^l \times Y)/\Ker(\eta)$ factors through an effective action of the
compact connected Abelian Lie group $(\R^l \times T^i)/\Ker(\eta)$. 

This gives an pure fF-structure on ${\mathcal M}$, since the
product bundle ${\mathcal M}^{\prime \prime} \rightarrow Z^{\prime \prime}$ is $(\R^l \times T^i)/\Ker(\eta)$-equivariant. The fF-structure
has positive rank unless $S$ is a point, in which case ${\mathcal M}$ is
diffeomorphic to $\R^n$.

\begin{remark} \label{remark2}
Regarding a possible converse to Theorem \ref{thm3}, Corollary \ref{cor2} gives a model for the local geometry of $M$, under a smoothing
assumption, and this subsection gives an fF-structure on the local model.  This fF-structure has positive rank unless the local model is
diffeomorphic to $\R^n$. We note that the latter possibility does not occur in the analogous discussion for double sided
curvature bounds
\cite[Section 2]{Cheeger-Gromov (1990)}. In that case, the local model is flat; if it were diffeomorphic to $\R^n$ then 
conclusion (4) of Proposition \ref{prop1} would be violated.  In contrast, in our case a model space diffeomorphic to $\R^n$,
which is collapsed at infinity, could well occur.

After possibly removing some disjoint topological  balls, centered around points whose local models are diffeomorphic to $\R^n$,
it is plausible that the local fF-structures on the complement  can be glued together to form
a global fF-structure of positive rank, for which the fibers of the submersions are manifolds that admit  Riemannian metrics
of nonnegative curvature operator.  

One approach to proving this involves showing that if $(M, g)$
is locally collapsed, with respect to a lower bound on the
curvature operator, then the local geometry around a point $p \in M$ is
appropriately close, at the volume scale, to one in which the smoothing assumption (2) of Corollary \ref{cor2} is satisfied.
This is true in the bounded curvature case \cite[Pf. of Theorem 2.3]{Cheeger-Tian (2006)}.
Related smoothing results appear in \cite[Section 4]{Cabezas-Rivas-Wilking (2011)} and
\cite[Theorem 2.2]{Simon (2009)}.

If the conclusion of Proposition \ref{prop1} is satisfied at each point
of $M$ then 
a second issue is to glue together the local fF-structures to a global fF-structure. Related gluing results  are in
\cite[Section 5]{Cheeger-Gromov (1990)} (for the group actions) and \cite{Kleiner-Lott (2010)}
(for the fiberings).
  
\end{remark}

\begin{remark} \label{remark3}
In the bounded curvature case, there are analogs of the F-structure versions of Theorems \ref{thm1}-\ref{thm3} when the
F-structure is replaced by a Nil-structure.  This was stated in \cite[Remark 2.1]{Cheeger-Gromov (1986)} and proved in
\cite{Cai-Rong (2009)}. We expect that Theorems \ref{thm1}-\ref{thm3} can be extended to the setting of fNil-structures. 
 In the other direction, the
main theorem of \cite{Cheeger-Fukaya-Gromov (1992)} describes the local geometry
of a locally collapsed manifold with a prescribed double-sided curvature bound, in {\em all} of the collapsed directions, in terms of a Nil-structure.
To have a similar result with curvature operator bounded below, the first
step would be to characterize the manifolds with almost nonnegative curvature operator (ANCO),
which is an interesting question in its own right.

It is clear that almost flat manifolds are ANCO, compact manifolds with
nonnegative curvature operator are ANCO, and products of ANCO manifolds are ANCO. Using (\ref{4.7}), one sees that 
if $M$ is ANCO then a principal $T^k$-bundle over $M$ is ANCO.  As noted in
\cite{Herrmann-Sebastian-Tuschmann (2012)}, this already shows that there are simply-connected
ANCO manifolds that do not admit a Riemannian metric of nonnegative curvature operator, coming from circle bundles over
products of complex projective spaces. See \cite{Herrmann-Sebastian-Tuschmann (2012)} for further discussion.
\end{remark}

\subsection{A special type of collapse} \label{subsection8.3}

Let $M$ be a compact Riemannian manifold.  Let $G$ be a compact connected Lie group of positive dimension that acts effectively and isometrically on $M$.
Given $m \in M$, let $G_m$ denote the isotropy subgroup at $m$. By the slice theorem, there are a finite-dimensional vector space 
$V_m$ and a homomorphism $\rho_m : G_m \rightarrow \GL(V_m)$ 
so that there is a $G$-invariant neighborhood of the orbit $G \cdot m$ which is $G$-diffeomorphic to $G \times_{G_m} V_m$.

\begin{proposition} \label{prop2}
Give $G$ a bi-invariant Riemannian metric.
Suppose that as $\epsilon \rightarrow 0$, the curvature operator of $M_\epsilon \: = \: M \times_G \epsilon G$ stays uniformly bounded below.
Then for each $m \in M$, the orbit $G/G_m$ is a locally symmetric space. Let ${\mathfrak g} \: = \: {\mathfrak g}_m + {\mathfrak m}_m$ be the
orthogonal decomposition, with $[{\mathfrak g}_m, {\mathfrak m}_m] \subset {\mathfrak m}_m$.  Then in addition, for all $X, Y \in {\mathfrak m}_m$, we have
$\rho_m \left( [X, Y]_{{\mathfrak g}_m} \right) \: = \: 0$.
\end{proposition}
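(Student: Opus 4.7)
The plan is to realize $M_\epsilon$ locally near the orbit $G \cdot m$ as an associated vector bundle provided by the slice theorem, then extract both conclusions from O'Neill's formula for this bundle. By the slice theorem, a $G$-invariant neighborhood of $G \cdot m$ in $M$ is $G$-equivariantly diffeomorphic to $G \times_{G_m} V_m$, so the corresponding piece of $M_\epsilon = M \times_G \epsilon G$ is isometric to $\epsilon G \times_{G_m} V_m$, the total space of the associated vector bundle over $\epsilon G/G_m$ with fiber $V_m$ and structure representation $\rho_m$. Using the reductive decomposition $\mathfrak{g} = \mathfrak{g}_m \oplus \mathfrak{m}_m$, the principal connection on $\epsilon G \to \epsilon G/G_m$ with horizontal space $\mathfrak{m}_m$ has curvature $F(X,Y) = -[X,Y]_{\mathfrak{g}_m}$ for $X, Y \in \mathfrak{m}_m$, so at a point $[(e, v)]$ the O'Neill A-tensor for the Riemannian submersion $s \colon M_\epsilon \to \epsilon G/G_m$ satisfies $A_{X^*} Y^* = \tfrac{1}{2}\rho_{m*}([X, Y]_{\mathfrak{g}_m})v \in V_m$ for the horizontal lifts of $X, Y \in \mathfrak{m}_m$.

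For the second conclusion I would study the sectional curvature of the $2$-plane $X^* \wedge Y^*$ at $[(e, v)]$ for a fixed nonzero $v \in V_m$. For $X_0, Y_0 \in \mathfrak{m}_m$ that are $G$-orthonormal, the $\epsilon G/G_m$-orthonormal horizontal lifts are $X_0/\epsilon$ and $Y_0/\epsilon$, so O'Neill's formula gives
\[
K_{M_\epsilon}(X^* \wedge Y^*) \;=\; \frac{1}{\epsilon^2}K_{G/G_m}(X_0, Y_0) \;-\; \frac{3}{4\epsilon^4}\bigl|\rho_{m*}([X_0, Y_0]_{\mathfrak{g}_m})v\bigr|^2.
\]
A uniform lower bound on the curvature operator of $M_\epsilon$ forces a uniform lower bound on its sectional curvature; since the $\epsilon^{-4}$ negative term would otherwise dominate the $\epsilon^{-2}$ positive term as $\epsilon \to 0$, we must have $\rho_{m*}([X_0, Y_0]_{\mathfrak{g}_m})v = 0$. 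Varying $v$ over $V_m$ and $X_0, Y_0$ over $\mathfrak{m}_m$ then yields $\rho_m([X, Y]_{\mathfrak{g}_m}) = 0$.

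For the first conclusion, once the A-tensor vanishes identically the O'Neill corrections disappear, and the curvature operator of $M_\epsilon$ on the subspace $\Lambda^2 T(\epsilon G/G_m) \subset \Lambda^2 T M_\epsilon$ reduces to $(1/\epsilon^2)\mathcal{R}_{G/G_m}$; the uniform lower bound in $\epsilon$ forces $\mathcal{R}_{G/G_m} \ge 0$. The de Rham decomposition classification from Section \ref{section2} now applies to the compact homogeneous space $G/G_m$: each factor in the universal cover is Euclidean, a round sphere, a compact irreducible symmetric space, or a K\"ahler manifold biholomorphic to $\C P^n$ with nonnegative curvature operator. Combined with the $G$-homogeneity, each factor must be locally symmetric (for the last case because homogeneous K\"ahler metrics on $\C P^n$ with nonnegative bisectional curvature are Fubini--Study), so $G/G_m$ is locally symmetric. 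The hard part will be carrying out the associated-bundle A-tensor computation and tracking the powers of $\epsilon$ from the bi-invariant rescaling; a secondary difficulty is ruling out a non-symmetric $\C P^n$-type factor in the final classification step, for which the homogeneity of $G/G_m$ is essential.
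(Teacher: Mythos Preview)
Your approach is genuinely different from the paper's: you attempt a direct O'Neill computation on $M_\epsilon$, while the paper passes to the pointed Gromov--Hausdorff limit $\bigl(\tfrac{1}{\epsilon}M_\epsilon, m\bigr) \to G\times_{G_m}V_m$, observes that this limit has nonnegative curvature operator (Proposition~\ref{prop1}), and then invokes Noronha's splitting theorem to read off both conclusions at once: the soul $G/G_m$ sits in a local isometric product, so the associated connection is flat, giving $\rho_m([X,Y]_{\mathfrak g_m})=0$, and the soul itself inherits the locally-symmetric structure.  Your route, by contrast, first isolates the $\epsilon^{-4}$-divergent term to force the vanishing of $\rho_m([X,Y]_{\mathfrak g_m})$, and only then deduces $\mathcal R_{G/G_m}\ge 0$.

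There is, however, a genuine gap in your first step.  The claim that a neighborhood of the orbit in $M_\epsilon = M\times_G \epsilon G$ is \emph{isometric} to $\epsilon G\times_{G_m}V_m$ is not correct: the Cheeger deformation metric on the orbit directions is a specific blend of the original $M$-metric and the scaled bi-invariant metric, not simply $\epsilon^2$ times the normal homogeneous metric.  What is true is that $\bigl(\tfrac{1}{\epsilon}M_\epsilon, m\bigr)$ converges in the pointed $C^K$-topology to $G\times_{G_m}V_m$ (with $G$ carrying the bi-invariant metric), which is exactly the convergence the paper exploits.  Your scaling analysis of the O'Neill $A$-tensor and the $\epsilon^{-4}$ term is morally correct at leading order, but to make it rigorous you must either carry out the exact Cheeger-deformation curvature computation, or pass to the limit---and the latter is precisely the paper's route via Proposition~\ref{prop1}.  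A secondary point: your final step from ``$G/G_m$ homogeneous with nonnegative curvature operator'' to ``locally symmetric'' is not automatic from the Section~\ref{section2} classification (the sphere and $\C P^n$ cases need an argument), though the paper is equally terse here.
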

\begin{proof}
As $\epsilon$ goes to zero, the spaces $M_\epsilon$ converge in the Gromov-Hausdorff topology to the lower-dimensional space
$M/G$.
The rescaled pointed spaces $\left( \frac{1}{\epsilon} M_\epsilon, m \right)$ converge in the pointed Gromov-Hausdorff topology to $G \times_{G_m} V_m$.
Hence the $\overline{w}$-volume scale at $m$ is proportionate to $\epsilon$. It follows from Proposition \ref{prop1} that
$G/G_m$ is a locally symmetric space and $G \times_{G_m} V_m$ is a vector bundle over $G/G_m$ with a local isometric product
structure. From \cite[Theorem 11.1]{Kobayashi-Nomizu (1963)}, the curvature of the principal bundle $G \rightarrow G/G_m$ is given by
$\Omega(X,Y) \: = \: - \frac12  [X, Y]_{{\mathfrak g}_m}$ for $X, Y \in {\mathfrak m}_m$. As $G \times_{G_m} V_m$ is the associated vector bundle, if its
connection is flat then
$\rho_m \left( [X, Y]_{{\mathfrak g}_m} \right) \: = \: 0$.
\end{proof}

\bibliographystyle{acm}

\end{document}